\documentclass[12pt,a4paper,leqno]{amsart}

\usepackage[top=1.1in, bottom=1.3in, left=1.5in, right=1.5in]{geometry}

\usepackage[ansinew]{inputenc}
\usepackage[T1]{fontenc}
\usepackage[english]{babel}
\usepackage{amsthm}
\usepackage{amsfonts}         
\usepackage{amsmath}
\usepackage{amssymb}
\usepackage{breqn}
\usepackage{bm}
\usepackage{url}
\usepackage{esint}

\newcommand{\R}{\mathbb{R}}
\newcommand{\C}{\mathbb{C}}

\newcommand{\N}{\mathbb{N}}

\newcommand{\E}{\mathbb{E}}

\theoremstyle{plain}
\newtheorem{theorem}{Theorem}
\newtheorem{lemma}[theorem]{Lemma}
\newtheorem{prop}[theorem]{Proposition}
\newtheorem{cor}[theorem]{Corollary}

\theoremstyle{remark}
\newtheorem{remark}{Remark}

\pagestyle{plain}
\addtolength{\hoffset}{-1.15cm}
\addtolength{\textwidth}{2.3cm}
\addtolength{\voffset}{0.45cm}
\addtolength{\textheight}{-0.9cm}
\pagenumbering{arabic}
\numberwithin{equation}{section}

\begin{document}

\title{Vector-valued local approximation spaces}

\author{Tuomas Hyt\"onen and Jori Merikoski}

\address{Department of Mathematics and Statistics, P.O.B. 68 (Gustaf H\"allstr\"omin katu 2b), FI-00014 University of Helsinki, Finland}
\email{tuomas.hytonen@helsinki.fi}
\email{jori.merikoski@gmail.com}

\thanks{Both authors were partially supported by the ERC Starting Grant ``AnProb'' (grant no. 278558) and the Finnish Centre of Excellence in Analysis and Dynamics Research (grant no. 271983).}

\keywords{Local approximation space, Besov space, embedding, uniformly convex space, martingale cotype, Littlewood--Paley theory}
\subjclass[2010]{46E35 (primary); 41A10, 42B25, 46B20, 60G46 (secondary)}


\begin{abstract}
We prove that for every Banach space $Y$, the Besov spaces of functions from the $n$-dimensional Euclidean space to $Y$ agree with suitable local approximation spaces with equivalent norms. In addition, we prove that the Sobolev spaces of type $q$ are continuously embedded in the Besov spaces of the same type if and only if $Y$ has martingale cotype $q$. We interpret this as an extension of earlier results of Xu (1998), and Mart\'inez, Torrea and Xu (2006). These two results combined give the characterization that $Y$ admits an equivalent norm with modulus of convexity of power type $q$ if and only if weakly differentiable functions have good local approximations with polynomials.
\end{abstract}

\maketitle

\tableofcontents

\section{Introduction}
Let $(Y, \|\cdot\|_Y)$ be a Banach space, and denote the unit ball in $Y$ by $B_Y := \{ y \in Y: \, \|y\|_Y \leq 1 \}.$ Recall that the norm $\|\cdot\|_Y$ (or the space $Y$) is uniformly convex if for every $\epsilon \in (0,2]$ there exists $\delta \in (0,1]$ such that 
\begin{align*}
\text{if } x,y \in B_Y, \,\, \|x-y\|_Y \geq \epsilon, \, \text{ then } \|x+y\|_Y \leq 2(1-\delta).
\end{align*}
The supremum over all such $\delta$ is denoted by $\delta_Y(\epsilon),$ and is called the \emph{modulus of convexity.} We say that the norm $\|\cdot\|_Y$ has \emph{modulus of convexity of power type $q \in [2,\infty),$} if $\delta_Y(\epsilon) \geq \epsilon^q/C^q$ for some uniform constant $C.$ A deep theorem of Pisier (cf. \cite{Pi}) says that a uniformly convex Banach space can always be renormed with an equivalent norm, whose modulus of convexity is of power type $q$ for some $q \in [2, \infty).$

In the pioneering work of Bates, Johnson, Lindenstrauss, Preiss and Schechtman \cite{BJLPS}, the geometry of $Y$ was related to approximation of Lipschitz functions $f:X \to Y$ by affine maps on macroscopically large balls, where $X$ is a finite dimensional Banach space. They defined the \emph{uniform approximation by affine property} of $\text{Lip}(X,Y),$ the space of Lipschitz functions $f:X \to Y,$ and showed that it  is equivalent to the statement that $(Y, \|\cdot\|_Y)$ has an equivalent uniformly convex norm.  One of our main results, Theorem \ref{convex2} below, is of similar flavour; it states that $Y$ admits an equivalent uniformly convex norm with modulus of convexity of power type $q$ if and only if all weakly differentiable functions $f:\R^n \to Y$ have good local approximations with polynomials, or more precisely, the $L^q$-type Sobolev spaces are continuously embedded in suitable local approximation spaces. The case of first order Sobolev spaces and their first order polynomial (i.e. affine) approximation already appeared in the recent works \cite{HLN,HN} as a step in a quantitative elaboration of the mentioned result of Bates et al. While we deal with Sobolev spaces and polynomial approximation of any order, some of the results, particularly on the neccessity of the geometric assumption, are new even in the first order case.

We measure the polynomial approximability  of functions by using a local approximation norm $\| \cdot \|_{A^{sN}_{pq}}$ (cf. notations below), where $s$ is a smoothness parameter, and $N$ is the maximum degree of polynomials allowed. Using this norm, we define the local approximation space by $A^{sN}_{pq}(\R^n;Y).$ It is classical that for $Y=\R, \C,$ the local approximation spaces are related to the \emph{Besov spaces} $B^s_{pq}(\R^n;Y),$ whose norms measure smoothness of a function using a Littlewood-Paley type decomposition. In fact, the classical result states that the two spaces are equal with equivalent norms. See \cite{Tri} (p. 51) for the classical results and their history. There is also a rich literature on embeddings of vector-valued Besov spaces (see e.g. \cite{Amann,KNVW,KS,MV,SSS,SS,Veraar}), but their relation to the local approximation spaces seems not to have been previously addressed, aside from the recent articles \cite{HLN,HN}. We prove in Theorem \ref{AequalsB} that the coincidence of Besov and local approximation spaces extends to the vector-valued setting without any requirement for the Banach space $Y.$ For us it is convenient to define the norm for the Besov space using the heat semigroup $T_t$ (cf. notations below). For $Y=\R, \C,$ using the heat semigroup gives a norm which is equivalent to the usual Besov norm (cf. \cite{Tri}, pp. 52-54, 151-155). This choice of norm allows us to use the semigroup property, and inequalites developed in the recent paper by Hyt\"onen and Naor \cite{HN} to greatly simplify the proofs. As in the first order case studied in \cite{HLN,HN}, this choice also allows to track a good dependence of the various constants on the dimension $n$ of the domain $\R^n,$ an aspect that is new even in the scalar valued case $Y=\R, \C.$

Once we have proved that $A^{sN}_{pq} =B^s_{pq}$ with equivalent norms, proving the promised characterization of uniform convexity of power type $q$ by the continuity of the embedding $W^{k,q} \hookrightarrow A^{kN}_{qq}$ is reduced to studying the embedding $W^{k,q}\hookrightarrow B^{kM}_{qq}.$ This is accomplished in Theorem \ref{convex} below. To prove Theorem \ref{convex}, we need the work of Pisier which relates the geometric properties of $Y$ to inequalities of martingales with values in $Y$ (cf. \cite{Pi}); recall that a sequence $\{M_k\}_{k=1}^{\infty}$ of $Y$-valued functions on a probability space $(\Omega,\mathcal{F}, \mu)$ is called a martingale if there is an increasing sequence of $\sigma$-algebras $\mathcal{F}_1 \subseteq \mathcal{F}_2 \subseteq \cdots \subseteq \mathcal{F}$  such that the conditional expectation $\E[M_{k+1} | \mathcal{F}_k] = M_k$ for all $k \geq 1.$ Then we have the following characterization:
\begin{theorem} \label{convexmart} \emph{(Pisier).} Let $q \in [2, \infty).$ A Banach $Y$ admits an equivalent norm with modulus of convexity of power type $q$ if and only if every martingale $\{M_k\}_{k=1}^{\infty},$ $M_k \in L^q(\Omega,\mu;Y),$ satisfies
\begin{align} \label{mcotype}
\left( \sum_{k=1}^{\infty} \| M_{k+1} - M_k \|^{q}_{L^q(\Omega,\mu;Y)}  \right)^{1/q} \lesssim \, \sup_{k\geq 1} \| M_k \|_{L^q(\Omega,\mu;Y)}. 
\end{align}
\end{theorem}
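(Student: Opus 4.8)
Theorem~\ref{convexmart} is a fundamental result of Pisier \cite{Pi}, which we use as a black box; for orientation we indicate how its two implications would be proved.

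\emph{Geometry $\Rightarrow$ martingale inequality.} Since \eqref{mcotype} is insensitive to replacing the norm of $Y$ by an equivalent one, the plan is to assume that $\|\cdot\|_Y$ has modulus of convexity of power type $q$ and then, after a further equivalent renorming if necessary, to pass to the pointwise $q$-uniform convexity inequality
\begin{align} \label{quc}
\|x\|_Y^q+\lambda^{-q}\|y\|_Y^q\ \le\ \tfrac12\|x+y\|_Y^q+\tfrac12\|x-y\|_Y^q\qquad(x,y\in Y)
\end{align}
for some finite $\lambda$. (The converse implication, that \eqref{quc} yields modulus of convexity of power type $q$, is elementary --- apply \eqref{quc} with $x=\tfrac12(u+v)$ and $y=\tfrac12(u-v)$ for $u,v\in B_Y$ --- whereas passing from ``power type $q$'' to \eqref{quc} is the nontrivial geometric step, available from \cite{Pi}.) Next I would observe that for Walsh--Paley martingales, where $d_{k+1}:=M_{k+1}-M_k=\epsilon_{k+1}v_k$ with $v_k$ being $\mathcal F_k$-measurable and $\epsilon_{k+1}$ a Rademacher variable independent of $\mathcal F_k$, the variables $M_k+d_{k+1}$ and $M_k-d_{k+1}$ have the same conditional law given $\mathcal F_k$; hence applying \eqref{quc} with $x=M_k$, $y=d_{k+1}$ and taking $\E[\,\cdot\mid\mathcal F_k]$ collapses the right-hand side to $\E[\|M_{k+1}\|_Y^q\mid\mathcal F_k]$ and yields
\begin{align} \label{qucmart}
\E[\|M_{k+1}\|_Y^q\mid\mathcal F_k]\ \ge\ \|M_k\|_Y^q+\lambda^{-q}\,\E[\|M_{k+1}-M_k\|_Y^q\mid\mathcal F_k].
\end{align}
Taking full expectations and summing \eqref{qucmart} over $k\ge 1$ telescopes --- using that $\E\|M_k\|_Y^q$ is nondecreasing in $k$ and bounded by $\sup_k\|M_k\|_{L^q(\Omega,\mu;Y)}^q$ --- to $\sum_{k\ge1}\|M_{k+1}-M_k\|_{L^q(\Omega,\mu;Y)}^q\le\lambda^q\sup_k\|M_k\|_{L^q(\Omega,\mu;Y)}^q$, which is \eqref{mcotype}. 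The reduction from general martingales to Walsh--Paley martingales is a standard approximation.

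\emph{Martingale inequality $\Rightarrow$ geometry.} Here the plan is to manufacture the equivalent norm out of the cotype constant $K$. One convenient route: \eqref{mcotype} for $Y$ implies, by a Hahn--Banach and martingale-orthogonality argument, a martingale-type-$q'$ inequality for $Y^*$ (with $q'=q/(q-1)$); one then smooths $Y^*$ by an infimal convolution to obtain an equivalent norm with modulus of smoothness of power type $q'$; and by the classical Lindenstrauss duality between the moduli of convexity and smoothness, its dual norm is then an equivalent norm on $Y$ with modulus of convexity of power type $q$. The construction of the smoothed norm --- verifying the triangle inequality and the power-type bound --- is the delicate part, and is carried out in \cite{Pi}.

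I expect the principal obstacle to be precisely this renorming construction underlying the second implication --- producing, from the martingale cotype constant alone, an equivalent norm that provably satisfies a power-type modulus estimate --- while the first implication is comparatively short once \eqref{quc} is in hand (the proof of \eqref{quc} itself being the other nontrivial ingredient). Both are due to Pisier \cite{Pi}, and we use Theorem~\ref{convexmart} in the stated form.
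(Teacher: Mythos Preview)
The paper does not prove Theorem~\ref{convexmart}; it is stated as Pisier's theorem, attributed to \cite{Pi}, and used as a black box throughout. Your proposal does exactly the same --- you explicitly say you use it as a black box and cite \cite{Pi} --- so in this sense your treatment matches the paper's.

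Your additional orientation sketch goes beyond what the paper provides, which is fine, but a small caution on the forward direction: the route via Walsh--Paley martingales followed by ``standard approximation'' to general martingales is not the most direct argument, and the approximation step is not entirely trivial to make precise (one needs to embed an arbitrary $L^q$-martingale into a dyadic filtration while controlling the increments). The more common derivation in the literature proceeds directly from the $q$-uniform convexity inequality in the form
\[
\Big\|\tfrac{x+y}{2}\Big\|_Y^q + c\,\Big\|\tfrac{x-y}{2}\Big\|_Y^q \le \tfrac12\|x\|_Y^q + \tfrac12\|y\|_Y^q,
\]
which says that $\|\cdot\|_Y^q$ is strongly midpoint-convex; combined with conditional Jensen and $M_k=\E[M_{k+1}\mid\mathcal F_k]$, this yields \eqref{qucmart} for arbitrary martingales without any Walsh--Paley reduction. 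Since you are only sketching and deferring to \cite{Pi} anyway, this is not a gap, but if you want the sketch to be self-contained you might prefer that route.
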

Following Pisier,  we say that the Banach space has \emph{martingale cotype $q$} if the above martingale inequality holds. Hence, the above theorem states that $Y$ admits an equivalent norm with modulus of convexity of power type $q$ if and only if $Y$ has martingale cotype $q.$ For a Banach space $Y$ of martingale cotype $q,$ we denote by $\mathfrak{m}(q,Y)$ the best constant in the above martingale inequality.

Using Theorem \ref{convexmart}, our task is further reduced to showing that the property of having martingale cotype $q$ can be characterized by the continuity of the embedding $$W^{k,q}(\R^n;Y) \hookrightarrow B^k_{qq}(\R^n;Y).$$ This is closely related to the discovery by Xu (cf. \cite{Xu}) that studying the martingale cotype of $Y$ is equivalent to studying the Littlewood-Paley theory of $Y$-valued functions. In fact, in Section \ref{sobolevsection} we show that the results of Xu in \cite{Xu}, and Mart\'inez, Torrea and Xu in \cite{MTX} (cf. Theorem 5.2 in particular), can be interpreted as the statement that $Y$ has martingale cotype $q$ if and only if $L^q(\R^n;Y) \hookrightarrow B^0_{qq}(\R^n;Y)$ continuously. This corresponds to the case $k=0$ of Theorem \ref{convex}. Thus, the present article extends the work in  the above mentioned papers. The proof of Theorem \ref{convex} relies heavily on the methods developed in \cite{Xu} and \cite{MTX}, as well as the Littlewood-Paley theory developed in \cite{HN}. Here also the definition of the Besov norm using the heat semigroup $T_t$ is very convenient.

\subsection{Notations} \label{notations}

In order to give a precise formulation of our contributions, we must first fix some notations. Let $Y$ be a Banach space, and let $n \in \N,$ $p \in [1,\infty].$   We will be considering mainly functions $f:\R^n \to Y,$ so without risk of confusion we may set $L^p:=L^p(\R^n; Y),$  and $\|\cdot \|_{L^p} := \|\cdot \|_{L^p(\R^n;Y)}$ for the space of $L^p$-functions $f:\R^n \to Y.$ 

Let $p\in[1, \infty ],$ and $q \in [1, \infty).$ For any  $s \in\R,$ and any integer $M > s/2,$ we define the  (homogeneous) Besov norm by
\begin{align*}
[ f ]_{B^{sM}_{pq}} & := \left (\int_0^{\infty}  t^{(M-s/2)q}\left \|   T_t^{(M)} f  \right \|_{L^p}^q \frac{dt}{t} \right)^{1/q},
\end{align*}
where $T_t^{(M)}$ is defined shortly.
For $s> 0$, we define the Besov space
\begin{equation*}
 B^{sM}_{pq}  :=  B^{sM}_{pq}(\R^n; Y) :=\{f \in L^p: \| f \|_{B^{sM}_{pq}}:=\|f\|_{L^p}+[f]_{B^{sM}_{pq}} < \infty\}.
\end{equation*}
(A modification is necessary for $s\leq 0$. Although we still consider the homogeneous norm $[\cdot]_{B^{0M}_{pq}}$ in this case, we refrain from defining the actual space, which in general contains singular distributions, as our results only deal with the intersection $L^p\cap B^{0M}_{pq}$. This is still given by the previous displayed formula with $s=0$.)

Above, $T_t = e^{\Delta t}$ is the heat semigroup, and  $T_t^{(M)} f = \partial_t^M T_t f = \Delta^{M} T_t f.$  We have the integral representation
\begin{align*}
T_t f(x) = \int_{\R^n} k_t(x-y) f(y) dy, \quad k_t(x) := \frac{1}{(4 \pi  t )^{n/2}}e^{-\vert x \vert^2/4t},
\end{align*}
where $k_t$ is the heat kernel.  We refer to \cite{Ste} for the classical theory of the heat semigroup, and to \cite{MTX} for the vector-valued theory. 

Similarly, for any nonnegative integer $N$  and $s>0,$ we define the (homogeneous)  local approximation norm by
\begin{align*}
[ f ]_{A^{sN}_{pq}} &:= \left(  \int_0^{\infty} t^{-sq}  \left \| \left( \inf_{\deg P \leq N} \fint_{B(x,t)} \| f(y)-P(y) \| _Y^p dy \right)^{1/p} \right \|_{L^p}^q \frac{dt}{t} \right) ^{1/q} , 
\end{align*}
where the infimum is over polynomials $P$ such that $\deg P \leq N.$ We define the local approximation space
\begin{equation*}
   A^{sN}_{pq}  := A^{sN}_{pq}(\R^n; Y):=\{f \in L^p: \| f \|_{A^{sN}_{pq}} :=\|f\|_{L^p}+[f]_{A^{sN}_{pq}}< \infty\}.
\end{equation*}
We will prove in Section \ref{localsection} below that for all $M>s/2$ and $N>s-1,$ the norms $\| \cdot \|_{B^{sM}_{pq}}$ and $\| \cdot \|_{A^{sN}_{pq}}$ are equivalent. Thus, we may refer to the underlying spaces by $B^{s}_{pq}$ and $A^{s}_{pq}.$ 

\begin{remark} We could have yet another parameter $u$ in the definition of the local approximation spaces by replacing the $L^p$-average over the balls $B(x,t)$ by an $L^u$-average (cf. \cite{Tri}, sec. 1.7.3, eq. (4)). We will not consider this additional generalization.
\end{remark}

For integers $k\geq 1,$ we define the nonhomogeneous Sobolev norm, and space by
\begin{align*}
\| f \|_{W^{k,q}} &:=\| f \|_{W^{k,q}(\R^n;Y)} :=  
 \sum_{0 \leq \vert \alpha \vert \leq k} \| \partial_x^{\alpha} f\|_{L^q}, \\
W^{k,q}=W^{k,q}(\R^n;Y) & := \{ f \in L^q:  \,  \partial_x^{\alpha} f \, \, \text{exists for all }\vert \alpha \vert \leq k \text{ and }  \| f \|_{W^{k,q}}  \leq \infty\}.
\end{align*}
Here $\alpha = (\alpha_1, \dots, \alpha_n),$ $\vert \alpha \vert =\alpha_1 + \cdots + \alpha_n,$ and the derivatives  $\partial_x^{\alpha} f = \partial_{x_1}^{\alpha_1}  \cdots \partial_{x_n}^{\alpha_n} f  $ are the weak derivatives. For $k=0$ we set $ \| \cdot \|_{W^{0,q}} =\| \cdot \|_{L^q},$ and $W^{0,q} (\R^n; Y ) =   L^q(\R^n; Y ).$

For a Banach space $Y$ of martingale cotype $q,$ we denote by $\mathfrak{m}(q,Y)$ the best constant in the martingale inequality (\ref{mcotype}) above. For asymptotic notations we denote for two positive real valued functions $f$ and $g$ that $f \lesssim g$ if there exist a constant $C$ such that $f \leq Cg.$ We write $f \asymp g$ if $f \lesssim g \lesssim f.$ If the implied constant is allowed to depend on  parameters, we write the parameters in the subscript (e.g. $\lesssim_{s,N,M}$).

\subsection{Results}

In Section \ref{localsection} below, we will prove the following theorem, which states that the Besov spaces agree with the corresponding local approximation spaces with equivalent norms. As mentioned above, this generalizes the classical results for $Y =\R,\C.$
\begin{theorem} \label{AequalsB} Let $Y$ be a Banach space and let $s> 0,$ $p \in [1, \infty],$ $q \in [1, \infty),$ $M > s/2,$ and $N > s-1.$ Then $B^{s}_{pq}(\R^n;Y) = A^{s}_{pq}(\R^n;Y),$ and we have
\begin{align*}
[ f ]_{A^{sN}_{pq}}  \lesssim_{s,N,M} [ f ]_{B^{sM}_{pq}} \lesssim_{s,N,M}  n^{c(N,M,s)}[ f ]_{A^{sN}_{pq}}
\end{align*} 
for $c(N,M,s) = (N+M+s+1)/2.$ 
\end{theorem}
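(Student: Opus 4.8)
The plan is to prove separately the two homogeneous‑seminorm estimates; since $\|\cdot\|_{A^{sN}_{pq}}$ and $\|\cdot\|_{B^{sM}_{pq}}$ differ from their homogeneous parts only by the common summand $\|f\|_{L^p}$, the identity of the spaces (for $s>0$) and the equivalence of the full norms follow at once. Throughout I would match the two parametrizations by the substitution $u=\rho^2$: with $\phi(\rho):=\rho^{2M-s}\,\|T_{\rho^2}^{(M)}f\|_{L^p}$ one has $[f]_{B^{sM}_{pq}}=2^{1/q}\|\phi\|_{L^q(d\rho/\rho)}$, so that the heat scale $\sqrt u$ is matched with the ball radius appearing in $[f]_{A^{sN}_{pq}}$, whose integrand I abbreviate $E_N(f,B(x,t))_p:=\inf_{\deg P\le N}\big(\fint_{B(x,t)}\|f(y)-P(y)\|_Y^p\,dy\big)^{1/p}$. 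The workhorse for all the time‑integral bounds is the Schur test: the degree‑zero‑homogeneous kernels $(\rho/t)^{a}\mathbf 1_{\rho<t}$ and $(t/\rho)^{a}\mathbf 1_{\rho>t}$ are bounded on $L^q(dt/t)$ precisely when $a>0$, and the hypotheses enter exactly here --- $s>0$ for the first kernel, $N+1>s$ (i.e.\ $N>s-1$) for the second.

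For $[f]_{A^{sN}_{pq}}\lesssim_{s,N,M}[f]_{B^{sM}_{pq}}$, which should carry \emph{no} power of $n$, I would start from the reproducing identity $f=(-1)^M\int_0^\infty \tfrac{u^{M-1}}{(M-1)!}\,T_u^{(M)}f\,du$, obtained by integrating $\partial_u^M T_uf=T_u^{(M)}f$ against $(u-a)^{M-1}/(M-1)!$ over $[a,\infty)$, integrating by parts $M$ times (the boundary terms at $u=\infty$ vanishing by the decay of $T_u^{(M)}f$) and letting $a\to 0^+$. Split it at $u=t^2$ as $f=g_t+h_t$. The low‑frequency piece is controlled by $\big\|(\fint_{B(\cdot,t)}\|g_t\|_Y^p)^{1/p}\big\|_{L^p}\le\|g_t\|_{L^p}\le\int_0^{t^2}\tfrac{u^{M-1}}{(M-1)!}\|T_u^{(M)}f\|_{L^p}\,du$, which after the substitution is $\lesssim\int_0^t(\rho/t)^s\phi(\rho)\,d\rho/\rho$. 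For $h_t$ (which is smooth) I take $P=P_{x,t}$ to be its degree‑$N$ Taylor polynomial at the centre $x$ and estimate the remainder by the \emph{directional} Taylor formula $h_t(y)-P(y)=\tfrac1{N!}\int_0^1(1-\theta)^N\,\partial_{y-x}^{N+1}h_t(x+\theta(y-x))\,d\theta$. Writing $T_u^{(M)}=T_{u/2}\circ T_{u/2}^{(M)}$ and moving $\|\cdot\|_{L^p}$ inside the $\theta$‑, ball‑ and $u$‑averages by Minkowski's inequality, one is reduced to the $L^1$‑norm of a single directional derivative of the heat kernel, $\|\partial_{y-x}^{N+1}k_{u/2}\|_{L^1}=|y-x|^{N+1}\,\|\partial_{\widehat{y-x}}^{N+1}k_{u/2}\|_{L^1}$, and the last factor is \emph{dimension‑free} because it collapses to a one‑dimensional Gauss--Hermite integral along the single direction $\widehat{y-x}$; this is precisely why this direction costs no power of $n$. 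Collecting terms, the $h_t$‑contribution is $\lesssim_{N}\int_{\rho\gtrsim t}(t/\rho)^{N+1-s}\phi(\rho)\,d\rho/\rho$, and the two Schur‑test bounds finish the argument.

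For the reverse estimate $[f]_{B^{sM}_{pq}}\lesssim_{s,N,M}n^{c(N,M,s)}[f]_{A^{sN}_{pq}}$ I would expand $T_{\rho^2}^{(M)}f(x)=\int k_{\rho^2}^{(M)}(x-z)f(z)\,dz$ with $k_t^{(M)}:=\Delta^M k_t=\partial_t^M k_t$, and use that $k_t^{(M)}$ has vanishing moments up to order $2M-1$ and Gaussian decay. Subtracting on each dyadic ball $B(x,2^j\rho)$ the best degree‑$N$ polynomial approximant $P_j^{(x)}$ of $f$, telescoping the $P_j^{(x)}$, and summing over $j\ge 0$, one controls $\|T_{\rho^2}^{(M)}f\|_{L^p}$ by a weighted sum of the quantities $\|E_N(f,B(\cdot,2^j\rho))_p\|_{L^p}$; the cross‑scale comparison of $P_{j+1}^{(x)}-P_j^{(x)}$ is handled by one‑dimensional Chebyshev/Markov estimates on the Euclidean ball (whose constants are dimension‑free), while the Gaussian decay of $k_{\rho^2}^{(M)}$ absorbs the resulting $2^{jN}$‑growth. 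A final Schur‑test/Hardy step (again $N+1>s$, after the $u=\rho^2$ substitution) converts the $\ell^q$‑sum over scales into $[f]_{A^{sN}_{pq}}$. One may carry this out directly, or first reduce the polynomial degree to $2M-1$ by separately comparing the local approximation seminorms of different degrees.

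The delicate points are all in this second direction. First, the dimensional constant: it is produced jointly by the growth $\|k_t^{(M)}\|_{L^1}\asymp_M t^{-M}n^{M/2}$ (the Laplacian power acting radially, evaluated against a $\chi^2_n$‑distributed argument) and by the equivalence of the $L^\infty$‑ and $L^p$‑average norms of degree‑$N$ polynomials on a ball in $\R^n$ (Christoffel‑function/Nikolskii‑type bounds, of polynomial order in $n$), so the bookkeeping must be arranged carefully enough to land on exactly $c(N,M,s)=(N+M+s+1)/2$ and not a larger exponent. Second --- and this is the main obstacle --- the vanishing‑moment cancellation only removes polynomials of degree $\le 2M-1$, so when $N>2M-1$ one is left with the defect $\int k_{\rho^2}^{(M)}(x-z)P_j^{(x)}(z)\,dz=\Delta^M T_{\rho^2}P_j^{(x)}(x)$, whose leading part $\Delta^M P_j^{(x)}(x)$ involves the degree‑$2M$ Taylor coefficients of the local best approximant; one must show this is still controlled by the local approximation errors without incurring an exponential‑in‑$n$ constant from comparing $L^p$‑averages over balls of different radii. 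The remaining ingredients --- convergence of the reproducing formula, the Minkowski manipulations, the measurability of $x\mapsto P_j^{(x)}$, and the $\|f\|_{L^p}$ term --- are routine.
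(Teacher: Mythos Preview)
For the direction $[f]_{A^{sN}_{pq}}\lesssim_{s,N,M}[f]_{B^{sM}_{pq}}$ your argument is a valid alternative to the paper's. Where you split $f$ via the Calder\'on reproducing formula $f=(-1)^M\int_0^\infty \tfrac{u^{M-1}}{(M-1)!}T_u^{(M)}f\,du$ at $u=t^2$, the paper instead takes as candidate the degree-$A$ Taylor polynomial at $x$ of $(-1)^K\tilde\Delta^K_{t^2}T_0f:=(-1)^K\sum_{j=1}^K(-1)^{K-j}\binom{K}{j}T_{jt^2}f$ (with $K$ the least integer $>s/2$), yielding the splitting $f-P=\Delta^K_{t^2}T_0f+(\text{Taylor remainder})$. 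Both routes land on a dimension-free directional-derivative bound for the heat kernel (your one-variable Gauss--Hermite observation is exactly Lemma~\ref{nabla}) and the same Hardy/Schur step on $L^q(dt/t)$. One caveat: your reproducing identity needs $b^j\,T_b^{(j)}f\to 0$ in $L^p$ for $0\le j<M$, which fails at $p=1$ (mass conservation) and $p=\infty$ (constants); the paper's finite-difference choice is well-defined for all $p\in[1,\infty]$ and sidesteps this.

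In the reverse direction your outline has a real gap, and the paper's mechanism is structurally different from your moment/Chebyshev--Markov scheme. You correctly flag that $k_t^{(M)}=\Delta^M k_t$ annihilates only polynomials of degree $\le 2M-1$; for $N>2M-1$ you do not resolve the residual $\Delta^M P_j^{(x)}$, and the suggested fix ``reduce the polynomial degree to $2M-1$'' goes the wrong way (raising $N$ \emph{decreases} $[f]_{A^{sN}_{pq}}$, so the bound for degree $2M-1$ does not imply it for larger $N$; raising $M$ instead via Lemma~\ref{ip} costs an extra $\sqrt n$ per step). The paper avoids all of this by generating the cancellation from \emph{time} differences rather than kernel moments: it telescopes $T_t^{(M)}f=(-1)^N\sum_{j_1,\ldots,j_N\ge 0}\Delta^N_{2^{j_1+\cdots+j_N}t}T^{(M)}_{2^{j_1+\cdots+j_N}t}f$, strips $T^{(M)}$ by $M$ applications of Lemma~\ref{dt} (cost $n^{M/2}$), then writes $T_{\alpha t^2}f(x)$ radially as a superposition of ball averages $\fint_{B(x,rt\sqrt\alpha)}f$ and observes that, tested against $y^\beta$ with $|\beta|$ even, these are polynomials of degree $|\beta|/2$ in $\alpha$ --- hence annihilated by $\Delta^N_1$ for $|\beta|<2N$ (and $\le 2N$ after a parity-matched variant that retains one spatial derivative). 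A further subdivision $\Delta^N_{2lt^2}T_{lt^2}=\sum_K\Delta^N_{t^2}T_{|K|t^2}$ with the specific choice $l=nN$ tames the factor $(1+N/|K|)^{n/2}$ and produces exactly $n^{(M+N+s+1)/2}$; no Chebyshev, Markov or Nikolskii inequality enters. Your dyadic-annulus scheme, even if the $N>2M-1$ obstacle were patched, would not obviously reach this exponent.
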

\begin{remark}
For geometric applications it is interesting to know the dependence on the dimension $n$ for the implied constants. For example in \cite{HN} and \cite{HLN}, which consider affine approximations, dependence on $n$ plays a crucial role in the results. For this reason, we have taken care in the proof of Theorem \ref{AequalsB} to obtain a polynomial dependence on $n.$
\end{remark}

Our investigations leave open the corresponding result for the closely related Triebel-Lizorkin spaces $F^{s}_{pq}(\R^n;Y)$ and norms, defined by setting, for $M>s/2>0$,
\begin{align*}
\| f \|_{F^{sM}_{pq}} & :=\|f\|_{L^p}+[f]_{F^{sM}_{pq}},\quad [f]_{F^{sM}_{pq}}:= \left \| \left (\int_0^{\infty}  t^{(M-s/2)q} \|  T_t^{(M)} f  \|_Y^q \frac{dt}{t} \right)^{1/q} \right\|_{L^p}.
\end{align*}
If we replace the local approximation norm respectively by
\begin{align*}
\left \| \left(  \int_0^{\infty} t^{-sq}   \left( \inf_{\deg P \leq N} \fint_{B(x,t)} \| f(y)-P(y) \| _Y^p dy \right)^{1/p} \frac{dt}{t} \right) ^{1/q} \right\|_{L^p},
\end{align*}
then the qualitative result corresponding to Theorem \ref{AequalsB} holds for $F^{s}_{pq}(\R^n;Y),$ $Y = \R, \C$ (cf. \cite{Tri}, pp. 151-155, 186-192). It might be of interest to obtain a vector-valued extension of this result as well, but this seems to be harder to achieve for technical reasons. Another possible extension consists of exponents in the quasi-Banach regime $p\in(0,1)$ and/or $q\in(0,1)$, but we also leave this for future investigations.

In Section \ref{sobolevsection} we change our attention to Sobolev spaces. Using Theorem \ref{convexmart}, we obtain from Theorem \ref{mart} below the following characterization of Banach spaces with an equivalent norm whose modulus of convexity has power type $q.$

\begin{theorem} \label{convex}
Let $Y$ be a Banach space, and $q \in [2, \infty).$ Let $k \geq 0,$ $n\geq 1,$ $M>k/2$ be integers. Then $Y$ admits an equivalent norm with modulus of convexity of power type $q$ if and only if $[f]_{B^{kM}_{qq}(\R^n;Y)}\lesssim\|f\|_{W^{k,q}(\R^n;Y)}$ for all $f\in W^{k,q}(\R^n;Y)$. 
\end{theorem}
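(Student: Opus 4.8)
## Proof proposal for Theorem \ref{convex}

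The plan is to reduce Theorem \ref{convex} to Pisier's characterization (Theorem \ref{convexmart}), so that it suffices to prove that $Y$ has martingale cotype $q$ if and only if $[f]_{B^{kM}_{qq}(\R^n;Y)}\lesssim\|f\|_{W^{k,q}(\R^n;Y)}$. This is exactly the content of the (yet unstated) Theorem \ref{mart}, which I anticipate will be phrased with quantitative control of the implied constant in terms of $\mathfrak{m}(q,Y)$ and of $n$. So the real work is split into a \emph{sufficiency} direction (martingale cotype $q$ $\Rightarrow$ the Sobolev--Besov embedding) and a \emph{necessity} direction (the embedding $\Rightarrow$ martingale cotype $q$). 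For the sufficiency direction I would follow the Littlewood--Paley machinery of Xu \cite{Xu} and Mart\'inez--Torrea--Xu \cite{MTX}: the key point is that, with the heat-semigroup definition of the Besov seminorm, one has $T_t^{(M)}f = \Delta^M T_t f$, and one can write $\Delta^M T_t f = \Delta^{M-\lceil k/2\rceil} T_{t/2}(\Delta^{\lceil k/2\rceil} T_{t/2} f)$, moving roughly $k/2$ Laplacians onto $f$ (at the cost of its $k$-th order derivatives, controlled by $\|f\|_{W^{k,q}}$) and keeping the remaining smoothing on the semigroup. The case $k=0$ is precisely the equivalence ``martingale cotype $q$ $\iff$ $L^q\hookrightarrow B^0_{qq}$'' which the authors attribute to \cite{Xu,MTX}; the general $k$ is obtained by this derivative-shifting trick, combined with the square-function/$g$-function estimates for the Poisson or heat semigroup acting on $L^q(\R^n;Y)$ that are valid precisely when $Y$ has martingale cotype $q$, with the good constants tracked in \cite{HN}.

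For the necessity direction I would argue by contradiction / by testing: assume the embedding $[f]_{B^{kM}_{qq}}\lesssim\|f\|_{W^{k,q}}$ holds, and deduce martingale cotype $q$. The natural route, again following \cite{Xu,MTX}, is first to reduce to the case $k=0$, i.e.\ to the embedding $L^q(\R^n;Y)\hookrightarrow B^0_{qq}(\R^n;Y)$. One way: given $f\in L^q$, apply the hypothesis to a suitable ``antiderivative'' of $f$ of order $k$ (for instance $g$ with $\widehat g(\xi)=|\xi|^{-k}\widehat f(\xi)$ localized away from the origin, or an iterated Bessel-potential $(I-\Delta)^{-k/2}$ applied to $f$), noting that $T_t^{(M)} g$ relates to $T_t^{(M-k/2)}f$ up to harmless bounded operators, and that $\|g\|_{W^{k,q}}\lesssim \|f\|_{L^q}$. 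This yields the $k=0$ embedding, and then the equivalence of that embedding with martingale cotype $q$ — which is the substance of the Xu / Mart\'inez--Torrea--Xu results — closes the loop. Concretely, from $L^q\hookrightarrow B^0_{qq}$ one builds a dyadic martingale whose differences are encoded by $T_{2^{-2j}}^{(M)}f$ applied to a lacunary or randomized sum of translates/dilates, and the Besov control becomes exactly the martingale cotype $q$ inequality \eqref{mcotype}; this is where Pisier's theory and Xu's identification of the relevant $g$-function enter.

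I expect the main obstacle to be the \textbf{necessity} direction, and more specifically the passage between the continuous Littlewood--Paley (heat-semigroup $g$-function) formulation and the genuinely discrete martingale inequality \eqref{mcotype}. In the scalar case this passage is classical, but in the vector-valued setting it is exactly the hard part of \cite{Xu,MTX}, and one must be careful that no hidden use of Hilbert-space structure, UMD, or type/cotype of $Y$ sneaks in — only martingale cotype $q$ is available. A secondary technical nuisance is the ``derivative shifting'' on the Besov side when $k$ is odd, since $\lceil k/2\rceil$ Laplacians do not exactly match $k$ first-order derivatives; this is handled by interpolation or by noting that the mixed derivative $\partial_x^\alpha T_t$ with $|\alpha|=k$ is a Fourier multiplier that differs from $\Delta^{k/2}T_t$ by a bounded operator on $L^q(\R^n;Y)$ uniformly in $t$ (a Mihlin-type statement that, for these specific smooth compactly-related multipliers, does not require any geometric assumption on $Y$). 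Tracking the polynomial-in-$n$ dependence of all these constants, in the spirit of Theorem \ref{AequalsB}, adds bookkeeping but no conceptual difficulty, since each ingredient (heat kernel bounds, $g$-function estimates from \cite{HN}) already comes with such control.
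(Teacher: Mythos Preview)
Your sufficiency direction is essentially the paper's: shift Laplacians onto $f$ and invoke the Littlewood--Paley--Stein inequalities of \cite{HN}. One correction: for odd $k$ you appeal to a ``Mihlin-type statement that \dots\ does not require any geometric assumption on $Y$'' to compare $\partial_x^\alpha T_t$ with $\Delta^{k/2}T_t$. That is false --- Mihlin multipliers on $L^q(\R^n;Y)$ need UMD in general. The paper sidesteps this by writing $T_t^{((k+1)/2)}f = \operatorname{div} T_t(\nabla \Delta^{(k-1)/2}f)$ and using the divergence-form inequality in Theorem~\ref{LPS}, which holds under martingale cotype $q$ alone.

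The real gap is in your necessity direction. Your plan is to take $g=(I-\Delta)^{-k/2}f$ (or a Riesz-potential variant) and use $\|g\|_{W^{k,q}}\lesssim\|f\|_{L^q}$. But this bound requires the $L^q(\R^n;Y)$-boundedness of the multipliers $\xi^\alpha(1+|\xi|^2)^{-k/2}$ for $|\alpha|=k$, i.e.\ essentially of Riesz transforms, and that is \emph{equivalent} to $Y$ being UMD --- precisely the assumption you cannot make here. (The paper's closing remark makes this point explicitly: $H^{k,q}(\R^n;Y)=W^{k,q}(\R^n;Y)$ is a UMD fact.) So the reduction to $k=0$ via antiderivatives does not go through for a general Banach space.

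The paper's route is quite different and avoids multipliers entirely. First it reduces the dimension to $n=1$ by testing the $n$-dimensional embedding on tensor products $f(x_1)\phi_N(\tilde x)$ with $\phi_N$ a suitable normalized bump. Then, in one dimension, it passes to the Poisson semigroup, where the identity $\ddot P_t=-\partial_x^2 P_t$ lets one trade time derivatives for space derivatives exactly (Lemma~\ref{partial}), giving
\[
\Big(\int_0^\infty t^q\|\dot P_t\,\partial_x^k f\|_{L^q}^q\,\frac{dt}{t}\Big)^{1/q}\ \lesssim\ \|f\|_{W^{k,q}(\R;Y)}.
\]
Finally, a scaling $f\mapsto f(\lambda\cdot)$ with $\lambda\to\infty$ kills all lower-order terms on the right, leaving $[g]_{\tilde B^{01}_{qq}}\lesssim\|g\|_{L^q}$ for $g=\partial_x^k f$, which is dense in $L^q(\R;Y)$. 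One then quotes \cite{MTX,Xu} for the $k=0$ case. None of these steps uses any Fourier multiplier bound on $Y$-valued $L^q$.
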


Combining Theorem \ref{convex} with Theorem \ref{AequalsB} yields the following characterization, which morally states that a Banach space admits an equivalent norm whose modulus of convexity is  of power type $q$ if and only if weakly differentiable functions admit good local approximations with polynomials.

\begin{theorem} \label{convex2}
Let $Y$ be a Banach space, and let $q \in [2, \infty).$ Let $k \geq 1,$ $n\geq 1,$ $N \geq k$ be integers. Then $Y$ admits an equivalent norm with modulus of convexity of power type $q$ if and only if 
 $[ f ]_{A^{kN}_{qq}(\R^n;Y)}  \lesssim \| f \|_{W^{k,q}(\R^n;Y)}$ for all $f \in W^{k,q}(\R^n;Y).$
\end{theorem}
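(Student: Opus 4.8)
The proof of Theorem \ref{convex2} is almost immediate once Theorems \ref{AequalsB} and \ref{convex} are in hand, so the plan is essentially to chain the two together. First I would observe that Theorem \ref{convex2} concerns only the homogeneous seminorm $[f]_{A^{kN}_{qq}}$ compared against the full Sobolev norm $\|f\|_{W^{k,q}}$, and that the relevant case of the approximation--Besov equivalence is $s=k>0$, $p=q$; since the hypothesis $N\geq k$ gives $N>k-1=s-1$ and the hypothesis $M>k/2$ gives $M>s/2$, Theorem \ref{AequalsB} applies and yields $[f]_{A^{kN}_{qq}}\asymp_{k,N,M}[f]_{B^{kM}_{qq}}$ for all $f\in L^q$, with the implied constants depending on $n$ only polynomially (though for the present qualitative statement the precise dependence is irrelevant). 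In particular $[f]_{A^{kN}_{qq}}\lesssim\|f\|_{W^{k,q}}$ holds for all $f\in W^{k,q}$ if and only if $[f]_{B^{kM}_{qq}}\lesssim\|f\|_{W^{k,q}}$ holds for all $f\in W^{k,q}$ (note $W^{k,q}\subset L^q$, so the equivalence of seminorms is available on the whole space in question).

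Next I would invoke Theorem \ref{convex} with the same $k\geq 1$, $n\geq 1$, and a choice of integer $M>k/2$ (any such $M$ works, e.g.\ $M=\lfloor k/2\rfloor+1$): it states precisely that $Y$ admits an equivalent norm with modulus of convexity of power type $q$ if and only if $[f]_{B^{kM}_{qq}}\lesssim\|f\|_{W^{k,q}}$ for all $f\in W^{k,q}$. Combining this with the previous paragraph's equivalence of the two seminorm bounds gives that $Y$ admits such a norm if and only if $[f]_{A^{kN}_{qq}}\lesssim\|f\|_{W^{k,q}}$ for all $f\in W^{k,q}$, which is the assertion of Theorem \ref{convex2}. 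One small point worth spelling out: Theorem \ref{convex} is stated for $k\geq 0$ while Theorem \ref{convex2} restricts to $k\geq 1$ and $N\geq k$ (so that $N$ is a \emph{nonnegative} integer exceeding $s-1=k-1$, matching the standing hypothesis $N>s-1$ of Theorem \ref{AequalsB}); these constraints are exactly what is needed for both cited theorems to apply, so no further case analysis is required.

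There is no real obstacle here — the theorem is a corollary, and the entire content has been pushed into Theorems \ref{AequalsB} and \ref{convex}. The only thing to be mildly careful about is bookkeeping of the parameter ranges: one must check that the hypotheses $k\geq 1$ and $N\geq k$ of Theorem \ref{convex2} feed validly into Theorem \ref{AequalsB} (which needs $s>0$, $N>s-1$) and that some admissible $M$ exists to feed into Theorem \ref{convex} (which needs the integer $M>k/2$, always possible). Since $\mathfrak{m}(q,Y)$ and the various implied constants enter only multiplicatively and the statement is purely qualitative, no quantitative tracking is needed, and the proof is a two-line deduction.
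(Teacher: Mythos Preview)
Your proposal is correct and matches the paper's approach exactly: the paper does not give a separate proof of Theorem \ref{convex2} but simply states that it follows by combining Theorem \ref{convex} with Theorem \ref{AequalsB}, which is precisely the two-step chaining you carry out (with the same parameter checks $s=k>0$, $N\geq k>s-1$, $M>k/2$).
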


\begin{remark} For $k =1, N=1,$ the above result is closely related to the open \emph{Question 10} proposed on page 10 of \cite{HN}. The question asks if a similar estimate for compactly supported Lipschitz is sufficient to show that $Y$ has martingale cotype $q.$ However, the estimate for the Lipschitz functions in \cite{HN} is seemingly a weaker condition than $[ f ]_{A^{11}_{qq}}  \lesssim \| f \|_{W^{1,q}},$ since the former follows from the latter.
\end{remark}

\begin{remark} In contrast to Theorem \ref{convex}, we only consider $k\geq 1$ in Theorem \ref{convex2}, since we have only defined and studied the local  approximation spaces with strictly positive smoothness $s >0.$ The same restriction also appears in the classical theory (cf. \cite{Tri}).
\end{remark}

See Theorem \ref{mart}, and Corollary \ref{mart2} below for more quantitative results, which give relations between the martingale cotype constant and the constants of the norm inequalities.

\section{Preliminaries}
We have gathered here some results from other papers which we will use. The following three lemmata on the spatial and temporal derivatives of the heat semigroup are from \cite{HN} (cf. Lemmata 25-27, pp. 22-24).

\begin{lemma} \label{div} Let $p \in [1, \infty],$ and let $Y$ be a Banach space. Then for any $\vec{f}= (f_1, \dots, f_n), $ $f_j \in L^p(\R^n;Y)$ we have
\begin{align*}
\| \sqrt{t} \, \text{\emph{div}} T_t \vec{f} \|_{L^p} \lesssim \sqrt{n} \fint_{S^{n-1}} \| \sigma \cdot \vec{f} \|_{L^p} \, d\sigma.
\end{align*}
\end{lemma}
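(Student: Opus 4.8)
The plan is to reduce everything to an explicit computation with the Gaussian kernel. Since $(T_t\vec f)_j = k_t * f_j$ and $\partial_{x_j} k_t(x) = -\tfrac{x_j}{2t}\,k_t(x)$, the divergence is a weighted first moment of $\vec f$ against the heat kernel:
\begin{align*}
\operatorname{div} T_t\vec f(x) = \sum_{j=1}^n (\partial_{x_j}k_t)*f_j(x) = -\frac{1}{2t}\int_{\R^n} \big(z\cdot\vec f(x-z)\big)\,k_t(z)\,dz.
\end{align*}
Passing to polar coordinates $z = r\sigma$ with $r = |z|$, $\sigma\in S^{n-1}$, and using that $k_t$ is radial, I would rewrite this as
\begin{align*}
\operatorname{div} T_t\vec f(x) = -\frac{1}{2t}\int_0^\infty \frac{r^n}{(4\pi t)^{n/2}}\,e^{-r^2/4t}\left(\int_{S^{n-1}} \sigma\cdot\vec f(x-r\sigma)\,d\sigma\right)dr,
\end{align*}
so that $\operatorname{div} T_t\vec f(x)$ is exhibited as a superposition of translates of the scalar-coefficient combinations $\sigma\cdot\vec f$.

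Next I would take the $L^p(\R^n;Y)$-norm in $x$ and apply Minkowski's integral inequality to bring the norm inside the $r$- and $\sigma$-integrals. By translation invariance of Lebesgue measure, $\|\sigma\cdot\vec f(\cdot-r\sigma)\|_{L^p} = \|\sigma\cdot\vec f\|_{L^p}$, so the radial and spatial integrations decouple and we are left with $\fint_{S^{n-1}}\|\sigma\cdot\vec f\|_{L^p}\,d\sigma$ times the scalar quantity $\tfrac{|S^{n-1}|}{2t}\int_0^\infty (4\pi t)^{-n/2}r^n e^{-r^2/4t}\,dr$. The substitution $u = r^2/4t$ evaluates the radial integral as $2^n t^{(n+1)/2}\Gamma(\tfrac{n+1}{2})$, whence the scalar constant equals $\tfrac{|S^{n-1}|\,\Gamma((n+1)/2)}{2\sqrt t\,\pi^{n/2}}$.

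Finally, inserting $|S^{n-1}| = 2\pi^{n/2}/\Gamma(\tfrac n2)$ collapses the constant to $\Gamma(\tfrac{n+1}{2})/(\sqrt t\,\Gamma(\tfrac n2))$, and Gautschi's inequality gives $\Gamma(\tfrac{n+1}{2})/\Gamma(\tfrac n2)\le\sqrt{(n+1)/2}\le\sqrt n$; multiplying through by $\sqrt t$ yields the claimed bound (indeed with implied constant $1$). The whole argument is elementary, and the only point requiring a little care is checking that the dimensional factor really comes out as $O(\sqrt n)$ rather than something worse — this is precisely where the Gamma function from the radial Gaussian moment and the Gamma function from the surface area of $S^{n-1}$ cancel against each other. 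A minor technical aside is justifying Minkowski's inequality and Fubini for the Bochner-valued integrand when $p=\infty$ or for a general Banach space $Y$; this follows from the rapid decay of the Gaussian, which makes $z\mapsto \big(z\cdot\vec f(x-z)\big)k_t(z)$ Bochner integrable for a.e.\ $x$ whenever $f_j\in L^p$. I do not anticipate any genuine obstacle.
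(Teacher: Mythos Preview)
Your argument is correct: the explicit Gaussian computation in polar coordinates, followed by Minkowski and the Gamma-function cancellation, is exactly the natural proof, and your tracking of the constant as $\Gamma((n+1)/2)/\Gamma(n/2)\asymp\sqrt{n}$ is accurate. The paper does not supply its own proof of this lemma; it is quoted verbatim from \cite{HN} (Lemmata~25--27), so there is nothing in the present paper to compare against beyond noting that your direct kernel computation is the standard route and almost certainly what \cite{HN} does as well.
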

\begin{lemma} \label{nabla}
Let $p \in [1, \infty],$ and let $Y$ be a Banach space. Then for any $f \in L^p(\R^n;Y),$ and any $z \in \R^n$ we have
\begin{align*}
\| \sqrt{t} \, (z \cdot \nabla) T_t f \|_{L^p} \lesssim |z| \| f \|_{L^p}.
\end{align*}
\end{lemma}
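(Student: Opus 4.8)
The plan is to reduce the claimed bound to a convolution estimate with an explicitly computable scalar kernel. Writing $T_t f = k_t * f$ and differentiating under the integral sign — which is justified by the rapid (Gaussian) decay of the derivatives of $k_t$, locally uniformly in $x$ — we obtain $(z\cdot\nabla)T_t f = \big((z\cdot\nabla)k_t\big) * f$. Since $\nabla k_t(x) = -\tfrac{x}{2t}\,k_t(x)$, the kernel is $(z\cdot\nabla)k_t(x) = -\tfrac{z\cdot x}{2t}\,k_t(x)$, a scalar function times the heat kernel.

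Next I would invoke Minkowski's integral inequality (the vector-valued Young inequality for convolution): for a scalar $g\in L^1(\R^n)$ and $h\in L^p(\R^n;Y)$ one has $\|g*h\|_{L^p}\le \|g\|_{L^1}\,\|h\|_{L^p}$ for every $p\in[1,\infty]$, with no geometric hypothesis on $Y$. Applying this with $g = \sqrt t\,(z\cdot\nabla)k_t$ gives
\[
\big\| \sqrt t\,(z\cdot\nabla)T_t f \big\|_{L^p} \le \big\| \sqrt t\,(z\cdot\nabla)k_t \big\|_{L^1}\,\|f\|_{L^p},
\]
so the matter is reduced to bounding $\big\| \sqrt t\,(z\cdot\nabla)k_t \big\|_{L^1}$ by a constant multiple of $|z|$, uniformly in $t>0$ and — crucially — in the dimension $n$.

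For this kernel bound the key point is to avoid the crude estimate $|z\cdot x|\le |z|\,|x|$, which would introduce a spurious factor $\sqrt n$; instead one treats $x\mapsto z\cdot x$ as a single one-dimensional Gaussian functional (equivalently, one may first use that $k_t$ is radial, so $T_t$ commutes with rotations, reducing to $z=|z|e_1$ and the single partial derivative $\partial_{x_1}$). Since $k_t(x)\,dx$ is the law of a centered Gaussian vector with covariance $2t\,I_n$, the random variable $z\cdot x$ is centered Gaussian with variance $2t|z|^2$, whence $\int_{\R^n}|z\cdot x|\,k_t(x)\,dx = \sqrt{2/\pi}\,\sqrt{2t}\,|z|$. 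Therefore
\[
\big\| \sqrt t\,(z\cdot\nabla)k_t \big\|_{L^1} = \frac{1}{2\sqrt t}\int_{\R^n}|z\cdot x|\,k_t(x)\,dx = \frac{1}{\sqrt\pi}\,|z|,
\]
which proves the lemma with an absolute constant.

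There is no substantial obstacle in this argument: the only steps needing a word of justification are differentiation under the integral sign (standard, from the decay of $\partial^\alpha k_t$) and the Minkowski inequality for $Y$-valued convolution (which is purely measure-theoretic and insensitive to $Y$). The one point genuinely worth emphasizing — and the reason the constant is dimension-free — is that one should compute the first absolute moment of the Gaussian \emph{in the direction $z$} rather than bound it by $|z|$ times the first moment of $|x|$, which grows like $\sqrt n$.
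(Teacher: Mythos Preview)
Your argument is correct and complete: the reduction to a scalar convolution estimate via Minkowski's inequality, followed by the computation of $\|\sqrt t\,(z\cdot\nabla)k_t\|_{L^1}$ using that $z\cdot x$ is a one-dimensional Gaussian of variance $2t|z|^2$, gives exactly the dimension-free constant $\pi^{-1/2}$. The emphasis on avoiding the crude bound $|z\cdot x|\le|z||x|$ is precisely the point.

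Note, however, that the paper does not actually prove this lemma: it is quoted in the Preliminaries as one of Lemmata~25--27 from \cite{HN}, so there is no ``paper's own proof'' to compare against. Your proof is the natural one and is almost certainly what appears in the cited reference.
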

\begin{lemma} \label{dt}
Let $p \in [1, \infty],$ and let $Y$ be a Banach space. Then for any $f \in L^p(\R^n;Y),$ and any $z \in \R^n$ we have
\begin{align*}
\| t \,\dot{T}_t f \|_{L^p} \lesssim \sqrt{n} \, \| f \|_{L^p}.
\end{align*}
\end{lemma}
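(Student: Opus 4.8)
The plan is to reduce the estimate to a Gaussian second-moment computation via Young's convolution inequality. Since $T_t$ acts by convolution with the heat kernel $k_t$ and $k_t$ solves the heat equation $\partial_t k_t=\Delta k_t$, we have $\dot T_t f=\partial_t T_tf=\Delta T_t f=(\partial_t k_t)*f$, hence $t\,\dot T_t f=(t\,\partial_t k_t)*f$. The scalar Young inequality passes to the $Y$-valued setting without loss, so $\|t\,\dot T_t f\|_{L^p}\le \|t\,\partial_t k_t\|_{L^1}\,\|f\|_{L^p}$ for every $p\in[1,\infty]$ and every Banach space $Y$. It therefore suffices to prove $\|t\,\partial_t k_t\|_{L^1}\lesssim\sqrt n$ uniformly in $t>0$.

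Next I would compute $\partial_t k_t$ directly. Logarithmic differentiation of $k_t(x)=(4\pi t)^{-n/2}e^{-|x|^2/4t}$ gives $\partial_t k_t(x)=k_t(x)\bigl(\tfrac{|x|^2}{4t^2}-\tfrac{n}{2t}\bigr)$, so $t\,\partial_t k_t(x)=k_t(x)\bigl(\tfrac{|x|^2}{4t}-\tfrac n2\bigr)$. The substitution $x=\sqrt{4t}\,u$ turns $k_t(x)\,dx$ into the standard Gaussian measure $d\gamma(u):=\pi^{-n/2}e^{-|u|^2}\,du$ and $\tfrac{|x|^2}{4t}$ into $|u|^2$, whence
\begin{align*}
\|t\,\partial_t k_t\|_{L^1}=\int_{\R^n}\Bigl||u|^2-\tfrac n2\Bigr|\,d\gamma(u).
\end{align*}

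Finally, under $\gamma$ the coordinates $u_i$ are i.i.d.\ with $\E u_i^2=\tfrac12$, so $\E_\gamma|u|^2=\tfrac n2$ and the integral above is the first absolute moment of the centered variable $|u|^2-\tfrac n2$; by Cauchy--Schwarz it is bounded by $\bigl(\operatorname{Var}_\gamma|u|^2\bigr)^{1/2}$. Since $\operatorname{Var}(u_i^2)=\E u_i^4-(\E u_i^2)^2=\tfrac34-\tfrac14=\tfrac12$ and the coordinates are independent, $\operatorname{Var}_\gamma|u|^2=\tfrac n2$, giving $\|t\,\partial_t k_t\|_{L^1}\le\sqrt{n/2}$ and hence $\|t\,\dot T_tf\|_{L^p}\le\sqrt{n/2}\,\|f\|_{L^p}$, which is the claim with an explicit constant.

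The argument is essentially routine; the only point that needs care is the last step, where the cancellation in $|u|^2-\tfrac n2$ must be used. The crude bound $\bigl||u|^2-\tfrac n2\bigr|\le|u|^2+\tfrac n2$ only yields $\|t\,\partial_t k_t\|_{L^1}\le n$, which is too lossy; replacing the mean by the standard deviation is precisely what produces the factor $\sqrt n$ rather than $n$ in the statement.
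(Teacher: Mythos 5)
Your proof is correct: the reduction via Young's inequality (which indeed holds verbatim for $Y$-valued $f$ with scalar kernel) leaves the bound $\|t\,\partial_t k_t\|_{L^1}\le\sqrt{n/2}$, and your Gaussian computation, exploiting the cancellation in $|u|^2-\tfrac n2$ through the variance of the chi-square-type variable, is exactly the right mechanism for obtaining $\sqrt n$ instead of $n$. The paper does not reprove this lemma but quotes it from the cited work of Hyt\"onen and Naor, where the argument is essentially the same kernel estimate, so your proposal matches the intended proof.
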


We will also need the  following vector-valued Littlewood-Paley-Stein inequalities from \cite{HN} (cf.  Theorems 17-18, p. 13).

\begin{theorem} \label{LPS}
Let $q \in [2,\infty),$ and $n \in \N.$ Suppose that $Y$ is a Banach space that admits  an equivalent norm with modulus of convexity of power type $q.$ Then for every $f\in L^q(\R^n;Y)$ we have
\begin{align*}
\left( \int_0^{\infty}  \|t \, \dot{T}_t f \|_{L^q}^q \frac{dt}{t}\right)^{1/q} \, \lesssim \mathfrak{m}(q,Y) \sqrt{n} \,  \| f \|_{L^q},
\end{align*}
and for every $\vec{f} \in l^n_q(L^q(\R^n;Y))$ we have
\begin{align*}
\left( \int_0^{\infty}  \| \sqrt{t} \,\text{\emph{div}} T_t \vec{f}\|_{L^q}^q  \frac{dt}{t}\right)^{1/q} \, \lesssim \mathfrak{m}(q,Y) \sqrt{n} \,  \| \vec{f} \|_{L^q}.
\end{align*}
\end{theorem}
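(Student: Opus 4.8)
The plan is to recognize both inequalities as vector-valued Littlewood--Paley--Stein $g$-function estimates for the heat semigroup and to prove them by a Rota-type martingale argument, in the spirit of \cite{Xu,MTX}, while tracking the dimensional constant by means of heat-kernel estimates of the type in Lemmata \ref{div}--\ref{dt}. First, since the exponent $q$ appears both in the inner time integral and in the outer spatial $L^q$-norm, Fubini's theorem rewrites the left-hand sides as $\|Gf\|_{L^q(\R^n)}$ and $\|\widetilde G\vec f\|_{L^q(\R^n)}$, where $Gf(x)^q=\int_0^\infty\|t\dot T_tf(x)\|_Y^q\,\frac{dt}{t}$ and $\widetilde G\vec f(x)^q=\int_0^\infty\|\sqrt t\,\mathrm{div}\,T_t\vec f(x)\|_Y^q\,\frac{dt}{t}$. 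By Pisier's theorem (Theorem \ref{convexmart}), the geometric hypothesis on $Y$ is equivalent to $Y$ having martingale cotype $q$ with finite constant $\mathfrak m(q,Y)$, and it is this martingale inequality that will be fed into the estimate.

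Second, I would realize the heat semigroup probabilistically: let $(B_s)_{s\ge0}$ be $n$-dimensional Brownian motion with $T_sg(x)=\mathbb E^x[g(B_s)]$, fix a terminal time $T>0$, and form the discrete martingale $M_k:=T_{t_k}f(B_{T-t_k})$ along the geometric grid $t_k=2^{-k}T$, with respect to the filtration generated by $(B_u)_{u\le T-t_k}$; this is the classical Rota/Stein construction, with $M_0=T_Tf(x)$ and $M_k\to f(B_T)$. By the Markov property, conditionally on $B_{T-t_k}=y$ the increment $M_{k+1}-M_k$ is distributed as $T_{t_{k+1}}f(y+Z)-T_{t_k}f(y)$ with $Z$ Gaussian of variance $\asymp t_k$; writing $T_{t_k}f(y)=\int T_{t_{k+1}}f(y+w)\,k_{t_k-t_{k+1}}(w)\,dw$ by the semigroup property turns this difference into an average of first differences $T_{t_{k+1}}f(y+Z)-T_{t_{k+1}}f(y+w)$ of the already-smoothed function $T_{t_{k+1}}f$. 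Integrating the $q$-th power over the starting point against Lebesgue measure (the invariant measure, so the path measure is translation invariant) and Taylor-expanding $T_{t_{k+1}}f$ then sandwiches $\|M_{k+1}-M_k\|_{L^q}^q$ between discrete analogues of $\int\|\sqrt{t_k}\,\nabla T_tf\|_{L^q}^q$ and $\int\|t_k\,\Delta T_tf\|_{L^q}^q$ at scales $t\asymp t_k$. Since Lebesgue measure is infinite, the abstract construction must be run on truncated domains (or via an exhaustion) and the estimates passed to the limit; this is one of the places requiring care.

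Third, apply martingale cotype $q$: $\sum_k\|M_{k+1}-M_k\|_{L^q(\Omega\times\R^n;Y)}^q\lesssim\mathfrak m(q,Y)^q\sup_k\|M_k\|_{L^q}^q\le\mathfrak m(q,Y)^q\|f\|_{L^q}^q$, using contractivity of $T_t$ on $L^q$ and Jensen for the last step. Combined with the previous step this controls a dyadically discretized version of $\|Gf\|_{L^q}$ and $\|\widetilde G\vec f\|_{L^q}$; letting $T\to\infty$ and averaging over a random dilation of the grid (a standard device exploiting the dilation invariance of $\int_0^\infty(\cdot)\,\frac{dt}{t}$) recovers the full continuous integrals. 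The remaining, and main, obstacle is to see that the output is $\sqrt n$ and not $n$: the gradient-scale increments contribute the dimension-free factor $\|\nabla k_t\|_{L^1}\asymp t^{-1/2}$, whereas for $t\dot T_t=t\Delta T_t$ the relevant second-order quantity is governed by the fluctuation $\mathbb E\big||B_1|^2-\mathbb E|B_1|^2\big|\asymp\sqrt n$, equivalently $\|\Delta k_t\|_{L^1}\asymp\sqrt n\,t^{-1}$. Organizing the Taylor remainders so that exactly this $\sqrt n$ appears --- which is cleanest after first reducing $\Delta$ and $\mathrm{div}$ to averages of directional derivatives over $S^{n-1}$, in the spirit of Lemma \ref{div} and Lemma \ref{nabla} --- is the delicate point, and is precisely why the argument is tied to the concrete heat semigroup rather than the abstract symmetric-diffusion-semigroup theorem of \cite{MTX}.
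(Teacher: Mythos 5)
First, a point of order: the paper does not prove this statement at all --- Theorem \ref{LPS} is imported verbatim from \cite{HN} (Theorems 17--18 there), so there is no internal proof to compare your argument against; within this paper a citation is the intended ``proof''. Judged as a proof of the cited result, your sketch is aimed in the right general direction (Rota/Stein-type martingale built from Brownian motion, martingale cotype via Theorem \ref{convexmart}, and the heat-kernel operator bounds of Lemmata \ref{div}--\ref{dt} to produce the $\sqrt{n}$), but as written it has two genuine gaps at exactly the places where the work lies. (a) The step extracting semigroup quantities from the martingale increments is wrong as stated: since $\mathbb{E}[M_{k+1}-M_k\mid\mathcal F_k]=0$, conditioning on the past position $B_{T-t_k}$ and ``Taylor-expanding'' cannot sandwich $\|M_{k+1}-M_k\|_{L^q}$ between discrete versions of $\|\sqrt{t_k}\,\nabla T_t f\|_{L^q}$ and $\|t_k\,\Delta T_t f\|_{L^q}$; what one can do is condition on the \emph{future} endpoint $B_{T-t_{k+1}}$ (equivalently, use the reversed filtration in Rota's dilation), which under the Lebesgue-invariant path measure gives $\mathbb{E}[M_{k+1}-M_k\mid B_{T-t_{k+1}}=y]=(T_{t_{k+1}}-T_{2t_k-t_{k+1}})f(y)$, and then Jensen bounds the $L^q$ norm of this semigroup \emph{difference} by $\|M_{k+1}-M_k\|_{L^q}$. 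So what cotype $q$ buys you directly is a bound on a difference-type $g$-function $\bigl(\sum_k\|(T_{t_{k+1}}-T_{3t_{k+1}})f\|_{L^q}^q\bigr)^{1/q}\lesssim\mathfrak m(q,Y)\|f\|_{L^q}$, not on the derivative quantities you need. (b) The passage from such differences at comparable scales to the continuous objects $t\dot T_t f$ and $\sqrt t\,\mathrm{div}\,T_t\vec f$ --- e.g.\ via the telescoping identity $\dot T_{2t}f=\sum_{j\ge0}\dot T_{2^jt}\bigl(T_{2^jt}-T_{3\cdot2^jt}\bigr)f$ together with the operator bounds $\|t\dot T_t\|_{L^q\to L^q}\lesssim\sqrt n$ (Lemma \ref{dt}) and the analogous divergence bound (Lemma \ref{div}) --- is precisely where the factor $\sqrt n$ enters, and your proposal only flags this as ``the delicate point'' without carrying it out; likewise the infinite-measure issue for the stationary Brownian path measure is acknowledged but not resolved. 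Until (a) is corrected and (b) is actually executed, the sketch does not yet constitute a proof of either inequality, although with those repairs it would plausibly reproduce the argument of \cite{HN}.
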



\section{Besov Spaces and Local Approximation spaces} \label{localsection}

Let $Y$ be any Banach space. As mentioned above,  if $Y$ is $\R$ or $\C,$ then the Besov spaces $B^{sM}_{pq}(\R^n; Y),$ and the local approximation spaces $A^{sN}_{pq}(\R^n; Y)$ are equal, and their norms are equivalent, provided that $M$ and $N$ are large enough compared to $s.$ Our goal in this section is to show that this result holds also when the target space $Y$ is any Banach space. 

We will need  the following lemma, which essentially states that the norms  $[ \cdot ]_{B^{sM}_{pq}} $ and $[ \cdot ]_{B^{sK}_{pq}} $ are equivalent provided that $M$ and $K$ are large enough compared to $s.$

\begin{lemma} \label{ip} For all $M > s/2,$ and $f \in L^p(\R^n; Y)$ we have  $[ f ]_{B^{sM}_{pq}} \lesssim_{s,M} [ f ]_{B^{s,M+1}_{pq}}.$ Similarly,  for all $M \geq 1,$ and $f \in L^p(\R^n; Y)$ we have  $[ f ]_{B^{sM}_{pq}} \lesssim_{s,M} \sqrt{n} \, [ f ]_{B^{s,M-1}_{pq}}.$ 
\end{lemma}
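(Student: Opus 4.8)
The plan is to deduce both estimates from pointwise-in-$t$ bounds for $\|T_t^{(M)}f\|_{L^p}$, integrated against the scale-invariant measure $dt/t$. I begin with the easy direction, the passage from order $M$ to order $M-1$. Since $\Delta$ and the heat operators $T_u$ all commute, one has
\begin{align*}
T_t^{(M)}f=\Delta^M T_t f=(\Delta T_{t/2})\bigl(\Delta^{M-1}T_{t/2}f\bigr)=\dot{T}_{t/2}\bigl(T_{t/2}^{(M-1)}f\bigr),
\end{align*}
so Lemma~\ref{dt} gives $\|T_t^{(M)}f\|_{L^p}\lesssim\sqrt n\,t^{-1}\bigl\|T_{t/2}^{(M-1)}f\bigr\|_{L^p}$. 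Multiplying by $t^{M-s/2}$ and substituting $u=t/2$ (which preserves $dt/t$ and changes the power weight only by the factor $2^{M-s/2}$) shows that the $L^q(dt/t)$-norm of the left side is $\lesssim_{s,M}\sqrt n\,[f]_{B^{s,M-1}_{pq}}$. This argument never uses $M-1>s/2$, so it holds for every integer $M\geq 1$, the right side being read as a possibly infinite quantity.

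For the harder direction, the passage from order $M$ to order $M+1$, I would use the fundamental theorem of calculus in the time variable. The preliminary input is the smoothing estimate
\begin{align*}
\|T_\tau^{(M)}f\|_{L^p}=\|\Delta^M T_\tau f\|_{L^p}=\bigl\|(\Delta T_{\tau/M})^M f\bigr\|_{L^p}\lesssim_M n^{M/2}\tau^{-M}\|f\|_{L^p},
\end{align*}
obtained from $M$ applications of Lemma~\ref{dt}; in particular $\|T_\tau^{(M)}f\|_{L^p}\to 0$ as $\tau\to\infty$, and $\int_t^\infty T_u^{(M+1)}f\,du$ converges as a Bochner integral in $L^p$ since $M+1>1$. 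Therefore $T_t^{(M)}f=-\int_t^\infty T_u^{(M+1)}f\,du$ in $L^p$, so with $\beta:=M-s/2>0$ and $g(u):=u^{M+1-s/2}\|T_u^{(M+1)}f\|_{L^p}$ we obtain
\begin{align*}
t^{M-s/2}\|T_t^{(M)}f\|_{L^p}\leq t^{\beta}\int_t^\infty\|T_u^{(M+1)}f\|_{L^p}\,du=\int_t^\infty\Bigl(\tfrac{t}{u}\Bigr)^{\beta}g(u)\,\frac{du}{u}=\int_0^1 v^{\beta}g(t/v)\,\frac{dv}{v},
\end{align*}
the last equality being the substitution $u=t/v$. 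Taking the $L^q(dt/t)$-norm, Minkowski's integral inequality together with the invariance of $\|\cdot\|_{L^q(dt/t)}$ under the dilation $t\mapsto t/v$ bounds the right side by $\bigl(\int_0^1 v^{\beta-1}\,dv\bigr)\|g\|_{L^q(dt/t)}=\beta^{-1}[f]_{B^{s,M+1}_{pq}}$, which is just a one-dimensional Hardy inequality and finishes the proof.

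The constants produced are $(M-s/2)^{-1}$ in the first inequality and a dimension-free multiple of $2^{M-s/2}$ in the second, so both depend only on $s$ and $M$, and the single factor $\sqrt n$ in the second inequality is exactly the one coming from Lemma~\ref{dt}. The only genuinely technical point is the identity $T_t^{(M)}f=-\int_t^\infty T_u^{(M+1)}f\,du$, i.e.\ that $t\mapsto T_t^{(M)}f$ is a $C^1$ curve in $L^p$ with derivative $T_t^{(M+1)}f$ that vanishes at infinity; this follows from the standard analyticity of the heat semigroup together with the smoothing estimate above, and can alternatively be checked by differentiating the heat-kernel representation of $T_t f$ under the integral sign.
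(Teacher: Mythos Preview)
Your proof is correct and follows essentially the same route as the paper: for the first inequality you use $T_t^{(M)}f=-\int_t^\infty T_u^{(M+1)}f\,du$ followed by Minkowski and a multiplicative change of variables (the paper takes $u=vt$ with $v\in(1,\infty)$, you take $u=t/v$ with $v\in(0,1)$, which is the same integral), and for the second inequality you split $T_t^{(M)}=\dot{T}_{t/2}T_{t/2}^{(M-1)}$ and invoke Lemma~\ref{dt}, exactly as in the paper. Your additional justification of the vanishing $\|T_\tau^{(M)}f\|_{L^p}\to 0$ via iterated use of Lemma~\ref{dt} is a welcome detail that the paper leaves implicit.
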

\begin{proof}
Since $f \in L^p,$ we have
\begin{align*}
\left (\int_0^{\infty}  t^{(M-s/2)q}\left \|   T_t^{(M)} f  \right \|_{L^p}^q \frac{dt}{t} \right)^{1/q} & = \left (\int_0^{\infty}  t^{(M-s/2)q}\left \|  \int_t^{\infty} T_u^{(M+1)} f  du \right \|_{L^p}^q  \frac{dt}{t} \right)^{1/q}  \\
& \leq \int_1^{\infty} \left (\int_0^{\infty}  t^{(M +1-s/2)q}\left \|   T_{vt}^{(M+1)} f   \right \|_{L^p}^q  \frac{dt}{t} \right)^{1/q} dv \\
& =\int_1^{\infty} v^{s/2-M-1} dv \left (\int_0^{\infty}  t^{(M +1-s/2)q}\left \|   T_t^{(M+1)} f   \right \|_{L^p}^q  \frac{dt}{t} \right)^{1/q}
\end{align*}
where the integral converges since $M> s/2.$

For the second part we have by Lemma \ref{dt}, and the semigroup property of $T_t$
\begin{align*}
 [ f ]_{B^{sM}_{pq}} 
&= \left (\int_0^{\infty}  t^{(M-1-s/2)q}\left \|   t \dot{T}_{t/2}  ( T^{(M-1)}_{t/2} f) \right \|_{L^p}^q \frac{dt}{t} \right)^{1/q} \\
& \lesssim_{s, M}  \sqrt{n} \left (\int_0^{\infty}  t^{(M-1-s/2)q}  \left \|      T^{(M-1)}_{t} f \right \|_{L^p}^q\frac{dt}{t} \right)^{1/q}.
\end{align*}
 \end{proof}

By the above lemma, we may define  $B^{s}_{pq}(\R^n;Y) := B^{sM}_{pq}(\R^n;Y),$ where $M >s/2$ is any integer, where all the suitable norms $ \| \cdot \|_{B^{sM}_{pq}} $ are equivalent.
Our aim is to show Theorem \ref{AequalsB}. We do this in two parts; first we show the inequality  $[ \cdot ]_{A^{sN}_{pq}}  \lesssim [ \cdot ]_{B^{sM}_{pq}},$ where we have uniform dependence on $n$ for the implied constant.

In the proofs below we will need the difference operators
\begin{align*}
\Delta^1_h T_t &:= T_{t+h} -T_t, \\
 \Delta^m_h T_t &:= \sum_{j =0}^m (-1)^{m-j} \binom{m}{j} T_{t+hj} = \Delta^1_h \Delta^{m-1}_h T_t =  \Delta^{m-1}_h \Delta^1_h T_t.\\
\end{align*} 

\begin{prop} \label{BinA} Let Y be a Banach space, and let $s>0,$ $p \in [1,\infty],$ $q \in [1, \infty),$ $M > s/2,$ and let $N > s-1.$ Then for all $f \in L^p(\R^n; Y)$ we have $[ f ]_{A^{sN}_{pq}}  \lesssim_{s,N,M} [ f ]_{B^{sM}_{pq}},$ where the implied constant depends only on $s,$ $N,$ and $M.$
\end{prop}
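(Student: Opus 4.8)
The plan is to bound, for each fixed $t>0$, the local approximation quantity
$$\Big\| \Big( \inf_{\deg P \le N} \fint_{B(x,t)} \|f(y)-P(y)\|_Y^p\,dy\Big)^{1/p}\Big\|_{L^p}$$
by choosing a \emph{concrete} near-optimal polynomial $P = P_{x,t}$ and estimating the resulting average, then integrating in $t$ against $t^{-sq}\,dt/t$. The natural candidate is the degree-$N$ Taylor polynomial of $T_{t^2}f$ about the center $x$ (or about $y$, whichever is more convenient for the averaging), since $T_{t^2}f$ is smooth and close to $f$ in an averaged sense. Thus I would split
$$f(y) - P_{x,t}(y) = \big(f(y) - T_{t^2}f(y)\big) + \big(T_{t^2}f(y) - P_{x,t}(y)\big),$$
where the second term is the Taylor remainder of a smooth function on a ball of radius $t$ and is therefore controlled, via Taylor's theorem with integral remainder, by $t^{N+1}$ times a sup (or average) of the $(N+1)$-st spatial derivatives of $T_{t^2}f$ over $B(x,t)$. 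For the first term I would write $f - T_{t^2}f = -\int_0^{t^2} \dot T_u f\,du$ and, to get the higher-order cancellation needed when $N$ is large, iterate this: using the difference operators $\Delta^m_h T_t$ introduced just before the statement, one expresses $f - T_{t^2}f$ as an $m$-fold iterated integral of $T_u^{(m)}f$ for $m$ up to roughly $N+1$, which is exactly the order matching $M > s/2$ after the change of variables $u = t^2$.

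Concretely, the key steps in order would be: (1) Reduce to a single scale: fix $t$, and for a.e.\ $x$ choose $P_{x,t}$ as above; observe $\deg P_{x,t}\le N$. (2) Estimate the smooth remainder: by Taylor, $\|T_{t^2}f(y) - P_{x,t}(y)\|_Y \lesssim_N t^{N+1}\sup_{|\alpha|=N+1}\sup_{B(x,t)}\|\partial^\alpha T_{t^2}f\|_Y$; then average in $y$ over $B(x,t)$ and take $L^p_x$ norm, and use the heat-semigroup spatial-derivative bounds (Lemma \ref{nabla} applied $N+1$ times, in the spirit of the second part of Lemma \ref{ip}) to convert $\partial^\alpha T_{t^2}f$ into $t^{-(N+1)}$ times $L^p$-controlled quantities — more precisely into terms of the form $\|(t\partial)^\beta T_{ct^2}f\|_{L^p}$ which, after peeling off the derivatives onto the heat kernel, reduce to $[f]_{B^{sM}_{pq}}$-type integrands with $2M \approx N+1$. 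Here I would need to be careful that the $L^p$-average of a sup over $B(x,t)$ of $\partial^\alpha T_{t^2}f$ is controlled by $\partial^\alpha T_{ct^2}f$ at a comparable scale — this follows because $\sup_{B(x,t)}\|g\| \lesssim \fint_{B(x,3t)}\|T_{ct^2}g_{\text{mollified}}\|$-type estimates, or more cleanly by writing $T_{t^2} = T_{t^2/2}T_{t^2/2}$ and using that $T_{t^2/2}$ maps into functions whose sup over a ball is controlled by a slightly larger average — essentially a reproducing/smoothing step. (3) Estimate the "approximation of $f$ by $T_{t^2}f$" term: write $f - T_{t^2}f$ as an $m$-fold iterated time-integral of $T_u^{(m)}f$, bound in $L^p$ by $\int\cdots\int \|T^{(m)}_{u_1+\cdots}f\|_{L^p}$, and recognize the result after the substitution $u=t^2$ and integrating against $t^{-sq}\,dt/t$ as a Hardy-type inequality landing on $[f]_{B^{sm}_{pq}}$; by Lemma \ref{ip} this is $\asymp_{s,m,M}[f]_{B^{sM}_{pq}}$ for any admissible $m,M > s/2$. (4) Combine via Minkowski's inequality in $L^q(t^{-sq}\,dt/t)$ and $L^p_x$ to get $[f]_{A^{sN}_{pq}} \lesssim_{s,N,M}[f]_{B^{sM}_{pq}}$, with no factor of $n$ because the Littlewood–Paley–Stein inequalities (Theorem \ref{LPS}) are \emph{not} invoked here — only the pointwise kernel bounds of Lemmata \ref{div}--\ref{dt} and \ref{ip}, and in this direction one can arrange to use only Lemma \ref{nabla} (which has an $n$-free constant) rather than Lemma \ref{div} or \ref{dt}.

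The main obstacle I anticipate is step (2): controlling the \emph{pointwise supremum} of spatial derivatives of $T_{t^2}f$ over a ball $B(x,t)$ by an $L^p$-quantity at scale $t$, uniformly and with good ($n$-polynomial, ideally $n$-free) constants. The clean way around it is to never take a genuine pointwise sup: instead, bound the Taylor remainder $T_{t^2}f(y) - P_{x,t}(y)$ directly by an integral of $(N+1)$-st derivatives \emph{along the segment from $x$ to $y$}, so that after averaging in $y\in B(x,t)$ one gets an average of $\|\partial^\alpha T_{t^2}f\|_Y$ over a ball, and then Minkowski plus the translation-invariance of $L^p$ and the kernel estimates finish it. A secondary technical point is the bookkeeping of how many derivatives $m=N+1$ one can afford versus the constraint $M>s/2$: since $N > s-1$ gives $N+1 > s$, hence $\lceil (N+1)/2\rceil \ge$ something $> s/2$ only when combined correctly, one may need to first pass from $M$ to a larger $M'$ via Lemma \ref{ip} so that $2M' \ge N+1$, which is harmless since all such Besov seminorms are equivalent; tracking that this costs at most a constant depending on $s,N,M$ (and no power of $n$ in this direction) is the bookkeeping that must be done carefully.
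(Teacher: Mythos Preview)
Your overall plan---choose a Taylor polynomial of a heat-smoothed version of $f$ and split into ``rough'' and ``smooth remainder'' pieces---matches the paper's, but your choice of smooth approximant $T_{t^2}f$ is too crude, and both of your steps (2) and (3) break down because of it. In step (3) you assert that $f-T_{t^2}f$ can be written as an $m$-fold iterated time-integral of $T^{(m)}_u f$; this is false. The identity $\Delta^1_h T_u f=\int_u^{u+h}\dot T_v f\,dv$ iterates to give $\Delta^m_{t^2}T_0 f=t^{2m}\int_{[0,1]^m}T^{(m)}_{t^2(u_1+\cdots+u_m)}f\,du$, but $f-T_{t^2}f=-\Delta^1_{t^2}T_0 f$ is only the \emph{first} difference, and no amount of rewriting produces an $m$-fold bounded integral with $O(t^{2m})$ smallness. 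Consequently your $I_1$ estimate yields at best $[f]_{B^{s,1}_{pq}}$, which is only meaningful for $s<2$. In step (2) you claim that peeling spatial derivatives $(z\cdot\nabla)^{N+1}$ off $T_{t^2}f$ via Lemma \ref{nabla} produces $[f]_{B^{sM}_{pq}}$-type integrands with $2M\approx N+1$; this is also false. Lemma \ref{nabla} applied $N+1$ times to $T_{t^2}f=T_{t^2/(N+1)}\cdots T_{t^2/(N+1)}f$ gives only $\|t^{N+1}(z\cdot\nabla)^{N+1}T_{t^2}f\|_{L^p}\lesssim_N\|f\|_{L^p}$, with \emph{no} time derivative left over---spatial directional derivatives do not convert to $T^{(K)}$. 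Hence your $I_2$ contributes $\int_0^\infty t^{-sq}\|f\|_{L^p}^q\,dt/t=\infty$ for every $s>0$.

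The missing idea is that the smooth approximant must itself be expressible through $T^{(K)}_u f$ for some $K>s/2$, not just through $f$. The paper achieves this by taking the Taylor polynomial not of $T_{t^2}f$ but of $(-1)^K\tilde\Delta^K_{t^2}T_0 f=\sum_{j=1}^K(-1)^{K-j}\binom{K}{j}T_{jt^2}f$, a linear combination of heat flows at times $t^2,\ldots,Kt^2$. Then the ``rough'' piece becomes exactly $\Delta^K_{t^2}T_0 f$, which \emph{does} have the $K$-fold integral representation and gives $[f]_{B^{sK}_{pq}}$; and for the Taylor remainder, one uses the identity $\tilde\Delta^K_t T_0 f=-t^K\int_{[0,\infty)^K\setminus[0,1)^K}T^{(K)}_{t(u_1+\cdots+u_K)}f\,du$, so that after applying Lemma \ref{nabla} $(A+1)$ times one lands back on $\|T^{(K)}_\cdot f\|_{L^p}$ rather than $\|f\|_{L^p}$. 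Both pieces then integrate to $[f]_{B^{sK}_{pq}}$, and Lemma \ref{ip} passes to general $M$.
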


\begin{proof} Set
\begin{align*}
 \tilde{\Delta}^{m}_h T_t &:=  \Delta^m_h T_t - (-1)^m T_t= \sum_{j =1}^m (-1)^{m-j} \binom{m}{j} T_{t+hj}.
\end{align*}

Let $K$ be the smallest integer $> s/2.$ Let $A = A(s):= 2K-1,$ if $\lfloor s \rfloor$ is odd, $A := 2K -2,$ if $\lfloor s \rfloor$ is even. Then $s < A+1 \leq N + 1.$ As a candidate for the polynomials $P$, $\deg P \leq N,$ in the definition of $\| f \|_{A^{sN}_{pq}},$ we choose $(-1)^{K}\text{Taylor}_x^{A} ( \tilde{\Delta}^{K}_{t^2} T_0 f),$ which is the order  $A$  Taylor expansion of the smooth function $(-1)^{K}\tilde{\Delta}^{K}_{t^2} T_0 f$ developed at $x.$ Then
\begin{align*}
[ f ]_{A^{sN}_{pq}}  \leq &  \left(  \int_0^{\infty} t^{-sq}  \left \| \left(  \fint_{B(x,t)} \| f(y)-(-1)^{K}\text{Taylor}_x^{A} ( \tilde{\Delta}^{K}_{t^2} T_0 f)(y) \| _Y^p dy \right)^{1/p} \right \|_{L^p}^q \frac{dt}{t} \right) ^{1/q} \\
 \leq &  \left(  \int_0^{\infty} t^{-sq}  \left \| \Delta^K_{t^2} T_0 f \right \|_{L^p}^q \frac{dt}{t} \right) ^{1/q} \\
 &+ \left(  \int_0^{\infty} t^{-sq}  \left \| \left(  \fint_{B(x,t)} \| \tilde{\Delta}^{K}_{t^2} T_0 f(y) -\text{Taylor}_x^{A} ( \tilde{\Delta}^{K}_{t^2} T_0 f)(y) \| _Y^p dy \right)^{1/p} \right \|_{L^p}^q \frac{dt}{t} \right) ^{1/q} \\
 =&:\, I_1 + I_2.
\end{align*}
The first integral is easy to estimate; using relations $\Delta^m_h = \Delta^1_h\Delta^{m-1}_h,$ and
\begin{align*}
\Delta^1_h T_t f = \int_t^{t+h} \dot{T}_u f du,
\end{align*} 
we obtain
\begin{align*}
\Delta^K_{t^2} T_0 f = t^{2K} \int_0^1 \cdots \int_0^1 T^{(K)}_{t^2(u_1 + \cdots+ u_K)} f du_1 \cdots du_{K}.
\end{align*}
Hence, we have
\begin{align*}
I_1 & \leq \int_0^1 \cdots \int_0^1  \left(  \int_0^{\infty} t^{(2K-s)q}  \left \|  T^{(K)}_{t^2(u_1 + \cdots +u_K)} f \right \|_{L^p}^q \frac{dt}{t} \right)^{1/q} du_1 \cdots du_{K} \\
& \leq  \int_0^1 \cdots \int_0^1 (u_1+ \dots +u_K)^{s/2 - K}  du_1 \cdots du_{K} \left(  \int_0^{\infty} t^{(K-s/2)q}  \left \|  T^{(K)}_{t} f \right \|_{L^p}^q \frac{dt}{2t} \right)^{1/q} \\
&  \leq  \int_{[0,1]^{K}} | u |^{s/2-K} du \left(  \int_0^{\infty} t^{(K-s/2)q}  \left \|  T^{(K)}_{t} f \right \|_{L^p}^q \frac{dt}{2t} \right)^{1/q} 
 \lesssim_{s, K}  [ f ]_{B^{sK}_{pq}},
\end{align*}
 since $|u| \leq u_1 + \cdots + u_K,$ and $K > s/2 > 0.$

For the second integral we use the integral formula for the error term in Taylor approximation to obtain
\begin{align*}
I_2 &=   \left(  \int_0^{\infty} t^{-sq}  \left \| \left(  \fint_{B(0,1)} \| \tilde{\Delta}^{K}_{t^2} T_0 f(x+tz) -\text{Taylor}_x^{A} ( \tilde{\Delta}^{K}_{t^2} T_0 f)(x+tz) \| _Y^p dz \right)^{1/p} \right \|_{L^p}^q \frac{dt}{t} \right) ^{1/q} \\
&= \left(  \int_0^{\infty} t^{-sq}  \left \| \left(  \fint_{B(0,1)} \left\| \int_0^1 \frac{(tz\cdot \nabla)^{A+1}}{A!} \tilde{\Delta}^{K}_{t^2} T_0 f(x+rtz)(1-r)^{A}  dr \right \| _Y^p dz \right)^{1/p} \right \|_{L^p}^q \frac{dt}{t} \right) ^{1/q} \\
& \leq  \int_0^1 \frac{(1-r)^{A}}{A!}  \left(  \int_0^{\infty} t^{-sq}  \left \| \left(  \fint_{B(0,1)} \left\|  (tz\cdot \nabla)^{A+1} \tilde{\Delta}^{K}_{t^2} T_0 f(x+rtz)   \right \| _Y^p dz \right)^{1/p} \right \|_{L^p}^q \frac{dt}{t} \right) ^{1/q} dr \\
& \lesssim_{s,K} \sup_{\vert z \vert \leq 1} \left(  \int_0^{\infty} t^{-sq/2}  \left \|   t^{(A+1)/2}(z\cdot \nabla)^{A+1} \tilde{\Delta}^{K}_{t} T_0 f    \right \|_{L^p}^q \frac{dt}{t} \right) ^{1/q} 
\end{align*}
by translation invariance in the last step. We have
\begin{align*}
\tilde{\Delta}^{K}_{t} T_0 f  & = \Delta^K_t T_0 f - (-1)^K T_0 f \\
&= \int_0^{t} \cdots \int_0^{t} T^{(K)}_{v_1 + \cdots + v_K} f dv_1 \cdots dv_{K}  - \int_0^{\infty} \cdots \int_0^{\infty} T^{(K)}_{v_1 + \cdots + v_K} f dv_1 \cdots dv_{K}  \\
&= - t^K \int_{\Omega} T^{(K)}_{t(u_1 + \cdots + u_K)} f du_1 \cdots du_{K}, 
\end{align*}
where $\Omega := [0, \infty)^K \setminus [0,1)^K.$ Thus,
\begin{align*}
I_2 \lesssim_{s,K} &  \sup_{\vert z \vert \leq 1} \left(  \int_0^{\infty} t^{(K-s/2)q}  \left \|   t^{(A+ 1)/2}(z\cdot \nabla)^{A+1} \int_{\Omega} T^{(K)}_{t(u_1 + \cdots + u_K)} f du_1 \cdots du_{K}  \right \|_{L^p}^q \frac{dt}{t} \right) ^{1/q} \\
 \lesssim_{s,K} &\int_{\Omega} (u_1 + \cdots + u_K)^{s/2 - K -(A+1)/2 } du_1 \cdots du_{K} \,  \times \\ 
 & \times  \sup_{\vert z \vert \leq 1} \left(  \int_0^{\infty} t^{(K-s/2)q}  \left \|   t^{(A+ 1)/2}(z\cdot \nabla)^{A+1}  T^{(K)}_{t} f  \right \|_{L^p}^q \frac{dt}{t} \right) ^{1/q} \\
 \lesssim_{s, K}& \sup_{\vert z \vert \leq 1} \left(  \int_0^{\infty} t^{(K-s/2)q}  \left \|   t^{(A+ 1)/2}(z\cdot \nabla)^{A+1}  \underbrace{ T_{t} \cdots T_{t}}_{A+1} T^{(K)}_{t}  f  \right \|_{L^p}^q \frac{dt}{t} \right) ^{1/q} \\
\lesssim_{s,K} &  \left(  \int_0^{\infty} t^{(K-s/2)q}  \left \|  T^{(K)}_{t} f \right \|_{L^p}^q \frac{dt}{t} \right)^{1/q}  = [ f ]_{B^{sK}_{pq}},
\end{align*}
where we have used Lemma \ref{nabla} $(A+1)$ times to obtain the last inequality, and the identity  $T^{(K)}_{(A+2)t} = T_{t} \cdots T_{t} T^{(K)}_{t},$ which holds by the semigroup property of $T_t$. The integral over $\Omega$ converges, since $(A+1)/2 > s/2.$ Hence, for the smallest integer $K>s/2$ we have $[ f ]_{A^{sN}_{pq}}  \lesssim_{s, K} [ f ]_{B^{sK}_{pq}}.$ For general $M > s/2,$  the claim $[ f ]_{A^{sN}_{pq}}  \lesssim_{s, N, M} [ f ]_{B^{sM}_{pq}}$ now follows from the first part of Lemma \ref{ip}.
\end{proof}

Proposition \ref{BinA} states that we have a continuous emmbedding $B^{s}_{pq} (\R^n; Y)\hookrightarrow A^{sN}_{pq} (\R^n; Y)$ for all Banach spaces $Y,$ and the constant in $[ \cdot ]_{A^{sN}_{pq}}  \lesssim [ \cdot ]_{B^{sM}_{pq}}$ is uniform in $n.$ The next proposition shows that the converse embedding also holds, but this time the constant grows polynomially in $n.$ These propositions combined prove Theorem \ref{AequalsB}.

\begin{prop} \label{AinB} Let $Y$ be a Banach space and let $s> 0,$ $p \in [1, \infty]$ $q \in [1, \infty),$ $M > s/2,$ and $N \geq 0.$ Then for all $f \in L^p(\R^n; Y)$
 we have $ [ f ]_{B^{sM}_{pq}} \lesssim_{s,N,M} n^{c(N,M,s)} [ f ]_{A^{sN}_{pq}},$ for $c(N,M,s) = (M+N+s+1)/2,$ where the implied constant depends only on $s,$ $N,$ and $M,$ 
\end{prop}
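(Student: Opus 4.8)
The plan is to establish the reverse inequality $[f]_{B^{sM}_{pq}}\lesssim_{s,N,M} n^{c(N,M,s)}[f]_{A^{sN}_{pq}}$ by reducing, via Lemma \ref{ip}, to a convenient choice of $M$ and then writing $T_t^{(M)}f$ in a form that directly exposes the local polynomial approximation error. The starting observation is that for any polynomial $P$ of degree $\leq N$, if $M$ is large enough (specifically $M>N/2$, so that $\Delta^M$ annihilates polynomials of degree $\leq N$, or equivalently $\partial_t^M T_t P=0$ for a harmonic-like cancellation), we have $T_t^{(M)}f=T_t^{(M)}(f-P)$ pointwise. The key is then to choose, at each scale, the near-optimal polynomial $P=P_{x,t}$ achieving (up to a factor $2$) the infimum in the definition of $[f]_{A^{sN}_{pq}}$ on the ball $B(x,\sqrt{t})$ (matching the time $t$ in the heat kernel to radius $\approx\sqrt t$), and to bound $\|T_t^{(M)}(f-P_{x,t})\|_{L^p}$ by the local $L^p$-average of $f-P_{x,t}$ over balls, with the appropriate power of $t$ and a controlled power of $n$.

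First I would fix $M$ to be the smallest integer exceeding $\max(s/2,N/2)$ and reduce to this case using the second (and first) part of Lemma \ref{ip} — each application of the ``$M\to M-1$'' step costs a factor $\sqrt n$, and the ``$M\to M+1$'' step is harmless in $n$; bookkeeping these gives the stated exponent $c(N,M,s)$. Next, with this $M$ fixed, for each $t>0$ I would note that the heat kernel $k_t^{(M)}(x)=\partial_t^M k_t(x)$ is, after rescaling, a fixed Schwartz function times $t^{-M-n/2}$ applied at scale $\sqrt t$; crucially $\int_{\R^n} k_t^{(M)}(x)\,x^\alpha\,dx=0$ for all $|\alpha|\leq N$ when $M>N/2$ (moment cancellation of $\partial_t^M$ of a Gaussian). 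Hence $T_t^{(M)}f(x)=\int k_t^{(M)}(x-y)\big(f(y)-P_{x,t}(y)\big)\,dy$ for any degree-$\leq N$ polynomial $P_{x,t}$. I would then split the integral dyadically in $|x-y|/\sqrt t$: on the annulus $|x-y|\sim 2^j\sqrt t$ the Gaussian weight contributes $\lesssim 2^{-M}t^{-M}(2^j)^{something}e^{-c4^j}$ and the integral of $|f-P_{x,t}|$ over $B(x,2^j\sqrt t)$ is controlled by the local-approximation quantity at radius $2^j\sqrt t$ (using that the optimal polynomial on a larger ball, or a rescaling argument relating $P$ on nested balls, changes things by at most a polynomial-in-$2^j$ factor since $\deg P\leq N$). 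Summing the rapidly decaying geometric series in $j$ leaves $\|T_t^{(M)}f\|_{L^p}\lesssim_{s,N,M} t^{-M}\cdot n^{\text{power}}\cdot\big(\text{local average at scale }\sqrt t\big)$, and then the substitution $t\mapsto t^2$ in the Besov integral converts $t^{(M-s/2)q}\,t^{-Mq}$ into $t^{-sq}$ and matches the $A^{sN}_{pq}$ integrand.

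The main obstacle I anticipate is twofold: controlling the $n$-dependence tightly enough to land exactly at $c(N,M,s)=(M+N+s+1)/2$, and handling the comparison of the near-optimal polynomials $P_{x,t}$ across different radii $2^j\sqrt t$ uniformly in $x$. For the first, the factors of $\sqrt n$ come from (i) the reduction in $M$ via Lemma \ref{ip}, (ii) the size of the Gaussian moments $\int |k_t^{(M)}|\,|x-y|^{N+1}\,dy\asymp n^{(N+1)/2}t^{-M}$-type bounds coming from $\E|Z|^{N+1}\asymp n^{(N+1)/2}$ for $Z$ standard Gaussian in $\R^n$, and (iii) possibly the Taylor-type comparison of polynomials; one must add up exactly $(M+N+s+1)/2$ powers and no more, which requires care but no deep idea. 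For the second, rather than comparing distinct optimal polynomials I would instead fix a single polynomial — the optimal one for the ball $B(x,\sqrt t)$ — and use the crude but $n$-robust bound that a degree-$\leq N$ polynomial normalized on $B(x,\sqrt t)$ grows at most like $(2^j)^N$ on $B(x,2^j\sqrt t)$, together with a reverse-Hölder / John-type inequality for polynomials on balls (dimension-free in the relevant normalization, or with at most polynomial $n$-loss absorbed into $c(N,M,s)$); the rapid Gaussian decay $e^{-c4^j}$ then swallows the polynomial growth in $2^j$. Finally I would invoke Lemma \ref{ip} once more to pass back from the special $M$ to a general $M>s/2$, completing the proof that, combined with Proposition \ref{BinA}, yields Theorem \ref{AequalsB}.
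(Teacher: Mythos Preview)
Your approach is genuinely different from the paper's, and while the qualitative strategy is sound, there is a real gap in the part you flag as the ``main obstacle.''

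The paper never uses the spatial moment cancellation of $\partial_t^M k_t$ and never compares approximating polynomials at different radii. Instead it telescopes in the \emph{time} variable to introduce an $N$-th order forward difference $\Delta^N_t$ of $T_t^{(M)}f$, then writes each $T_{\alpha t^2}f(x)$ as a radial integral of ball averages $\fint_{B(x,rt\sqrt\alpha)}f$. The point is that for a monomial $y^\beta$ with $|\beta|$ even, $\fint_{B(0,rt\sqrt\alpha)}y^\beta$ is a polynomial of degree $|\beta|/2$ in $\alpha$, so the $N$-th time-difference annihilates it whenever $|\beta|<2N$. Thus at each fixed radius one may subtract the polynomial optimal for \emph{that} radius, and after a change of variables the $A^{sN}_{pq}$ integrand appears directly. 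All $n$-dependence comes from three transparent sources: the $M$ applications of Lemma~\ref{dt} ($n^{M/2}$), a Gaussian moment $\int_0^\infty|\partial_r k_1(r)|\,|B(r)|\,r^s\,dr\asymp_s n^{s/2}$, and a subdivision trick with $l=nN$ that contributes $n^N$; a separate odd/even refinement with one spatial derivative gives the final $(M+N+s+1)/2$.

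Your route, by contrast, needs to control $\fint_{B(x,2^j\sqrt t)}\|f-P_{x,\sqrt t}\|_Y^p$ where $P_{x,\sqrt t}$ is optimal only on the \emph{smallest} ball. Your stated fix --- that a degree-$N$ polynomial grows like $(2^j)^N$ --- bounds $P_{x,\sqrt t}$, not $f-P_{x,\sqrt t}$, on the large ball. To get the latter you must run the standard chain argument comparing the optimal polynomials $P_i$ on the dyadic balls $B_i$; this uses (i) the doubling $\fint_{B_{i-1}}|f-P_i|^p\le 2^n\fint_{B_i}|f-P_i|^p$ and (ii) a reverse-H\"older inequality $\|Q\|_{L^\infty(B)}\le C(n,N)\big(\fint_B|Q|^p\big)^{1/p}$ for polynomials $Q$ of degree $\le N$. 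Neither constant is dimension-free (e.g.\ for $Q(x)=x_1$ one already has $C(n,1)\gtrsim\sqrt n$), and together with the $2^{jn}$ volume factors in the dyadic sum these would have to conspire perfectly against the Gaussian decay to produce exactly $(M+N+s+1)/2$. That is not ``care but no deep idea'': it is the whole difficulty, and the paper's ball-average representation is precisely the device that removes it. Your sketch, as written, does not show how to reach the claimed exponent.
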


\begin{proof}
We have
\begin{align*}
 T^{(M)}_t f &= \sum_{j_1=0}^{\infty}T^{(M)}_{2^{j_1}t} f - T^{(M)}_{2^{j_1+1}t} f = -\sum_{j_1=0}^{\infty}\Delta^{1}_{2^{j_1}t} T^{(M)}_{2^{j_1}t} f\\
&= -\sum_{j_1=0}^{\infty} \sum_{j_2=0}^{\infty}\Delta^{1}_{2^{j_1+ j_2}t} T^{(M)}_{2^{j_1+j_2}t} f - \Delta^{1}_{2^{j_1+j_2+1}t} T^{(M)}_{2^{j_1+j_2+1}t} f  \\
&= \sum_{j_1=0}^{\infty} \sum_{j_2=0}^{\infty}\Delta^{2}_{2^{j_1+ j_2}t} T^{(M)}_{2^{j_1+j_2}t} f = \cdots \\
&= (-1)^N\sum_{j_1=0}^{\infty} \cdots \sum_{j_N=0}^{\infty}  \Delta^{N}_{2^{j_1+ \cdots + j_N}t} T^{(M)}_{2^{j_1+ \cdots + j_N}t} f .
\end{align*}
 
Thus,
\begin{align*}
[ f ]_{B^{sM}_{pq}} &= \left (\int_0^{\infty}  t^{(M-s/2)q}\left \|   T_t^{(M)} f  \right \|_{L^p}^q \frac{dt}{t} \right)^{1/q} \\
& \leq  \sum_{j_1=0}^{\infty} \cdots \sum_{j_N=0}^{\infty}  \left (\int_0^{\infty}  t^{(M-s/2)q}\left \|   \Delta^{N}_{2^{j_1+ \cdots + j_N}t} T^{(M)}_{2^{j_1+ \cdots + j_N}t} f \right \|_{L^p}^q \frac{dt}{t} \right)^{1/q} \\ 
&=  \sum_{j_1=0}^{\infty} \cdots \sum_{j_N=0}^{\infty} 2^{(j_1+ \cdots + j_N)(s/2-M)} \left (\int_0^{\infty}  t^{(M-s/2)q}\left \|   \Delta^{N}_{t} T^{(M)}_{t} f \right \|_{L^p}^q \frac{dt}{t} \right)^{1/q}  \\
& \lesssim_{s,M,N} \left (\int_0^{\infty}  t^{(M-s/2)q}\left \| \Delta^N_{t} T^{(M)}_{t} f  \right \|_{L^p}^q \frac{dt}{t} \right)^{1/q},
\end{align*}
  by making the change of variables  $t \mapsto t 2^{-(j_1 + \cdots + j_N)}.$ By linearity of $T_t,$ change of variables $t \mapsto 2t,$ and Lemma \ref{dt} applied $M$ times, we obtain
\begin{align}
[ f ]_{B^{sM}_{pq}} & \lesssim_{s,M,N}\left (\int_0^{\infty}  t^{(M-s/2)q}\left \| T^{(M)}_t \Delta^N_{2t} T_{t} f  \right \|_{L^p}^q \frac{dt}{t} \right)^{1/q} \label{timeout} \\
& \lesssim_{M} n^{M/2} \left (\int_0^{\infty}  t^{-s/2q}\left \| \Delta^N_{2t} T_{t} f  \right \|_{L^p}^q \frac{dt}{t} \right)^{1/q}.  \nonumber
\end{align}
To avoid having exponential dependence on $n,$ we  divide the differences into smaller differences; for any positive integer $l$ we have
\begin{align*}
\Delta^N_{2lt^2} T_{lt^2} & =  \Delta^{N-1}_{2lt^2} \sum_{k_1=l}^{3l-1} \Delta^{1}_{t^2} T_{k_1 t^2} = \sum_{k_1=l}^{3l-1} \Delta^{1}_{t^2} \Delta^{N-1}_{2lt^2} T_{k_1 t^2}  \\
&= \sum_{k_1=l}^{3l-1}  \Delta^{1}_{t^2} \Delta^{N-2}_{2lt^2} \sum_{k_2 = 0}^{2l-1} \Delta^{1}_{t^2} T_{(k_1+k_2)t^2} = \sum_{k_1=l}^{3l-1} \sum_{k_2 = 0}^{2l-1} \Delta^{2}_{t^2} \Delta^{N-2}_{2lt^2} T_{(k_1+k_2)t^2} = \cdots \\
& =  \sum_{k_1=l}^{3l-1}\sum_{k_2 = 0}^{2 l-1} \cdots \sum_{k_N = 0}^{2l-1} \Delta^{N}_{t^2} T_{(k_1+ \cdots + k_N)t^2}  =: \sum_K \Delta^{N}_{t^2} T_{\vert K\vert t^2},
\end{align*}
where $K = (k_1, \dots, k_N )$ and $\vert K \vert = k_1+ \cdots + k_N.$ Thus, by a change of variables $t \mapsto l t^2$ we have
\begin{align*}
[ f ]_{B^{sM}_{pq}} & \lesssim_{s,N,M} n^{M/2} l^{-s/2} \sum_K \left (\int_0^{\infty}  t^{-sq}\left \| \Delta^{N}_{t^2} T_{\vert K\vert t^2}  f \right \|_{L^p}^q \frac{dt}{t} \right)^{1/q}.
\end{align*}
We have for all $\alpha>0$ (abusing the notation for the heat kernel)
\begin{align*}
T_{\alpha t^2} f (x) &= \int_{\R^n} k_{\alpha t^2} (y) f(x-y)\, dy= - \int_{\R^n}  \int_{\vert y \vert}^{\infty} \partial_u k_{\alpha t^2} (u) \, du f(x-y)\, dy \\
&= -\int_0^{\infty} \fint_{B(0,u)} f(x-y) \, dy  \, \partial_u k_{\alpha t^2} (u) \vert B(u) \vert\, du \\
& = -\int_0^{\infty} \fint_{B(0,rt\sqrt{\alpha} )} f(x-y) \, dy  \, \partial_r k_{1} (r) \vert B(r) \vert\, dr,
\end{align*}
where $\vert B(r) \vert$ is the volume of the ball $B(0,r).$ Hence, $\Delta^N_{t^{2}} T_{\vert K\vert t^{2}} f (x) $ is equal to
\begin{align*}
  -\int_0^{\infty} \left( \sum_{j=0}^N (-1)^{N-j} \binom{N}{j} \fint_{B\left(0,rt\sqrt{\vert K\vert+j} \right)} f(x-y) \, dy  \right)\, \partial_r k_{1} (r) \vert B(r) \vert\, dr.
\end{align*}
We now claim that
\begin{align*}
\sum_{j=0}^N (-1)^{N-j} \binom{N}{j} \fint_{B\left(0,rt\sqrt{\vert K\vert+j} \right)} f(x-y) \, dy
\end{align*}
is invariant under replacing $f(x-y)$ by $f(x-y)-p(x-y),$ where $p$ is any polynomial such that $\deg p < 2N,$ and $p$ can depend on the parameters $x,$ $t,$ $r,$ and $K.$ Clearly it suffices to show that
\begin{align} \label{odd}
\sum_{j=0}^N (-1)^{N-j} \binom{N}{j} \fint_{B\left(0,rt\sqrt{\vert K\vert+j} \right)} y_1^{\beta_1} \cdots y_n^{\beta_n} \, dy = 0
\end{align}
for $\vert \beta \vert:=\beta_1 + \cdots + \beta_n < 2N.$ If $\vert \beta \vert $  is odd, then the above holds trivially, since the integral vanishes. If $|\beta|$ is even, then
\begin{align*}
\fint_{B(0,rt\sqrt{\alpha} )} y_1^{\beta_1} \cdots y_n^{\beta_n} \, dy  &= \frac{1}{\vert B(1)\vert} \int_{S^{n-1}} \sigma_1^{\beta_1} \cdots \sigma_n^{\beta_n} \, d\sigma \int_0^{rt \sqrt{\alpha}} \frac{u^{\vert \beta \vert+ n-1}}{(rt \sqrt{\alpha})^n} du \\
&= C_{\beta, n} r^{\vert \beta \vert} t^{\vert \beta \vert} \alpha^{\vert \beta \vert/2} =:Q_\beta(\alpha).
\end{align*}
Thus, if $\vert \beta\vert/2 < N,$ and  $|\beta|$ is even, then 
\begin{align*}
\sum_{j=0}^N (-1)^{N-j} \binom{N}{j} \fint_{B\left(0,rt\sqrt{\vert K\vert+j} \right)} y_1^{\beta_1} \cdots y_n^{\beta_n} \, dy = (\Delta^N_1 Q_\beta)(\vert K \vert) = 0.
\end{align*}
Hence, we may replace $f(x-y)$ by $f(x-y)-p^{r,K}_{x,t}(x-y),$ where $p^{r,K}_{x,t}$ are any polynomials of degree $< 2N.$ Thus, by estimating the averages by the largest one we obtain
\begin{align*}
[ f ]_{B^{sM}_{pq}}  \lesssim&_{s,N,M} \, n^{M/2} l^{-s/2} \sum_K  \sum_{j=0}^N \binom{N}{j} \int_0^{\infty} \vert \partial_r k_{1} (r) \vert \vert B(r) \vert\,  \times \\
& \times \left (\int_0^{\infty}  t^{-sq}\left \| \fint_{B\left(x,rt\sqrt{\vert K\vert+j} \right)} f(y) - p^{r,K}_{x,t}(y) \, dy \right \|_{L^p}^q \frac{dt}{t} \right)^{1/q} dr \\
 \leq & \,  n^{M/2} l^{-s/2}  \sum_K  \sum_{j=0}^N \binom{N}{j} \int_0^{\infty} \vert \partial_r k_{1} (r) \vert \vert B(r) \vert  \,  \left(\frac{\vert K\vert+N}{\vert K\vert +j}\right)^{n/2}\times \\
& \times \left (\int_0^{\infty}  t^{-sq}\left \| \fint_{B\left(x,rt\sqrt{\vert K\vert+N} \right)} f(y) - p^{r,K}_{x,t}(y) \, dy \right \|_{L^p}^q \frac{dt}{t} \right)^{1/q} dr \\
 \leq & \, C_{n,M,N,s}  \left (\int_0^{\infty}  t^{-sq}\left \| \left( \fint_{B\left(x,t\right)} \| f(y) - p_{x,t}(y) \|_Y^p \, dy \right)^{1/p} \right \|_{L^p}^q \frac{dt}{t} \right)^{1/q},
\end{align*}
if we choose $ p^{r,K}_{x,t} = p_{x, rt\sqrt{\vert K\vert+N} },$ and make the changes of variables $rt \sqrt{\vert K\vert+N} \mapsto t.$  The constant $C_{n,M,N,s} $ is equal to
\begin{align*}
n^{M/2} l^{-s/2}\int_0^{\infty} \vert \partial_r k_{1} (r) \vert \vert B(r) \vert r^s \, dr \sum_K  \sum_{j=0}^N \binom{N}{j}  \left(\frac{\vert K\vert +N}{\vert K\vert +j}\right)^{n/2} (\vert K\vert +N)^{s/2}.
\end{align*}
We have
\begin{align*}
\int_0^{\infty} \vert \partial_r k_{1} (r) \vert \vert B(r) \vert r^s \, dr &= \frac{\vert B(1)\vert }{2} \int_0^{\infty} r^{n+s+1}  \frac{e^{-r^2/4}}{(4\pi)^{n/2}} \, dr \\
&= \frac{\vert B(1) \vert}{(4\pi)^{n/2}} 2^{n+s+1} \int_0^{\infty} u^{(n+s)/2} e^{-u} \, du \\
&\asymp_s \frac{\Gamma(n/2+s/2+1)}{\Gamma(n/2+1)} \asymp_s n^{s/2}.
\end{align*}
Hence, for $l = nN$ we have
\begin{align*}
C_{n,M,N,s}  & \lesssim_{s,N} n^{M/2}  \sum_{k_1=nN}^{3nN-1}\sum_{k_2 = 0}^{2nN-1} \cdots \sum_{k_N = 0}^{2nN-1} \left(1+\frac{N}{\vert K\vert}\right)^{n/2} (\vert K\vert +N)^{s/2} \\
&\lesssim_{s,N} n^{M/2+s/2} \left(1+\frac{1}{n}\right)^{n/2} \sum_{k_1=nN}^{3nN-1}\sum_{k_2 = 0}^{2 nN-1} \cdots \sum_{k_N = 0}^{2nN-1} 1 \\
& \lesssim_{s,N} n^{M/2+N+s/2},
\end{align*}
since each of the sums has $2nN$ terms. Therefore,  for any polynomials $p_{x,t}$ such that $\deg p_{x,t} < 2N$ we have
\begin{align*}
 [ f ]_{B^{sM}_{pq}}  \lesssim_{s,N,M} n^{M/2+N+s/2} \left (\int_0^{\infty}  t^{-sq}\left \| \left( \fint_{B\left(x,t\right)} \| f(y) - p_{x,t}(y) \|_Y^p \, dy \right)^{1/p}  \right \|_{L^p}^q \frac{dt}{t} \right)^{1/q}.
\end{align*}
 Thus, we have  $[ f ]_{B^{sM}_{pq}} \lesssim_{s,N,M} n^{c(2N-1,M,s)}  [ f ]_{A^{s,2N-1}_{pq}}$ for $c(2N-1, M,s) = M/2+N+s/2.$ This already implies the theorem for
\begin{align*}
 c(N,M,s) = \begin{cases} & (M+N+s+1)/2, \quad \text{if} \,\,  N \,\, \text{is odd} \\ & (M+N+s+3)/2, \quad \text{if} \,\, N \,\, \text{is even.} \end{cases}
\end{align*} 
This asymmetry stems from the fact that the equality (\ref{odd}) holds trivially for odd $|\beta|.$ This is because the heat kernel is an even function.
To mend this, and  show that the estimate $ c(N,M,s) = (M+N+s+1)/2$ holds also when $N$ is even, we must retain one spatial derivative of $T_t$ all the way back in (\ref{timeout}). This will give us a kernel that is an odd function, thus favouring polynomials of even degree. Hence, by Lemma \ref{dt} applied $(M-1)$ times, and Lemma \ref{div} applied once, we obtain
\begin{align*}
[ f ]_{B^{sM}_{pq}} & \lesssim_{s,M,N}\left (\int_0^{\infty}  t^{(M-s/2)q}\left \| T^{(M-1)}_t \Delta^N_{3t} \dot{T}_{2t} f  \right \|_{L^p}^q \frac{dt}{t} \right)^{1/q} \\
& \lesssim_{s,M} n^{(M-1)/2} \left (\int_0^{\infty}  t^{(1-s/2)q}\left \| \Delta^N_{3t} (\text{div} T_{t} (\nabla T_t f))  \right \|_{L^p}^q \frac{dt}{t} \right)^{1/q} \\
& \lesssim  n^{M/2} \sup_{\sigma \in S^{n-1}}\left (\int_0^{\infty}  t^{-sq/2}\left \|  \sqrt{t} \, \Delta^N_{3t} ((\sigma \cdot \nabla) T_{t} f) \right \|_{L^p}^q \frac{dt}{t} \right)^{1/q},
\end{align*}
which yields for any positive integer $l$ by similar calculations as before
\begin{align*}
[ f ]_{B^{sM}_{pq}} \lesssim_{s,M}n^{M/2} l^{(1-s)/2}  \sup_{\sigma \in S^{n-1}} \sum_K \left (\int_0^{\infty}  t^{-sq}\left \| t \Delta^N_{t^2} ((\sigma \cdot \nabla) T_{\vert K \vert t^2} f) \right \|_{L^p}^q \frac{dt}{t} \right)^{1/q}.
\end{align*}
We now have for any $\alpha > 0$
\begin{align*}
(\sigma \cdot \nabla) T_{\alpha t^2} f (x) &= -\frac{1}{2\alpha t^2}\int_{\R^n} (\sigma \cdot y )k_{\alpha t^2} (y) f(x-y)\, dy \\
& = \int_0^{\infty} \left( \frac{1}{2\alpha t^2} \fint_{B(0,rt\sqrt{\alpha} )} (\sigma \cdot y )f(x-y) \, dy\right)   \partial_r k_{1} (r) \vert B(r) \vert\, dr.
\end{align*}
Thus, $t \Delta^N_{t^2} ((\sigma \cdot \nabla) T_{\vert K \vert t^2} f)$ is equal to
\begin{align*}
\int_0^{\infty} \left( \sum_{j=0}^N (-1)^{N-j} \binom{N}{j} \frac{1}{2(\vert K\vert +j) t} \fint_{B(0,rt\sqrt{\vert K\vert +j} )} (\sigma \cdot y )f(x-y) \, dy \right)\, \partial_r k_{1} (r) \vert B(r) \vert\, dr.
\end{align*}
Arguing similarly as above, we can show that this is invariant under replacing $f(x-y)$ by $f(x-y)-p^{r,K}_{x,t}(x-y),$ where $p^{r,K}_{x,t}$ are any polynomials of degree $\leq 2N.$ Note that in contrast to (\ref{odd}), which is trivial when $\vert \beta \vert=\beta_1 + \cdots + \beta_n $ is odd, we now have to show that
\begin{align} \label{even}
\sum_{j=0}^N (-1)^{N-j} \binom{N}{j} \frac{1}{(\vert K\vert +j) t} \fint_{B\left(0,rt\sqrt{\vert K\vert+j} \right)} (\sigma \cdot y ) y_1^{\beta_1} \cdots y_n^{\beta_n} \, dy = 0
\end{align}
for $\vert \beta \vert \leq 2N.$ This holds trivially when $\vert \beta \vert$ is even, which allows us to take $\vert \beta \vert \leq 2N,$ and improve the resulting estimate for polynomials of even degree. For odd $| \beta | < 2N,$ and any $j=1,\dots, n$  we have
 \begin{align*}
 \frac{1}{\alpha t}\fint_{B(0,rt\sqrt{\alpha} )} y_j y_1^{\beta_1} \cdots y_n^{\beta_n} \, dy  &= \frac{1}{\alpha t \vert B(1)\vert} \int_{S^{n-1}} \sigma_j \sigma_1^{\beta_1} \cdots \sigma_n^{\beta_n} \, d\sigma \int_0^{rt \sqrt{\alpha}} \frac{u^{\vert \beta \vert+ n}}{(rt \sqrt{\alpha})^n} du \\
&= C_{\beta, j, n} r^{\vert \beta \vert+1} t^{\vert \beta \vert} \alpha^{(\vert \beta \vert-1)/2} =:Q_\beta(\alpha),
 \end{align*}
which is of degree $< N.$ Thus, the differences guarantee that (\ref{even}) holds.  Therefore, for any polynomials $p^{r,K}_{x,t}$ of degree $\leq 2N$ we have
\begin{align*}
[ f ]_{B^{sM}_{pq}}  \lesssim&_{s,N,M} \, n^{M/2} l^{(1-s)/2} \sup_{\sigma \in S^{n-1}} \sum_K  \sum_{j=0}^N \binom{N}{j} \int_0^{\infty} \vert \partial_r k_{1} (r) \vert \vert B(r) \vert\,  \times \\
& \times \left (\int_0^{\infty}  t^{-sq}\left \| \fint_{B\left(x,rt\sqrt{\vert K\vert+j} \right)} \frac{\sigma \cdot (x-y)}{(\vert K\vert +j) t} \left( f(y) - p^{r,K}_{x,t}(y)\right) \, dy \right \|_{L^p}^q \frac{dt}{t} \right)^{1/q} dr  \\
& \leq C_{n,M,N,s} \left (\int_0^{\infty}  t^{-sq}\left \| \left( \fint_{B\left(x,t\right)} \| f(y) - p_{x,t}(y) \|_Y^p \, dy \right)^{1/p} \right \|_{L^p}^q \frac{dt}{t} \right)^{1/q},
\end{align*}
by estimating $|\sigma \cdot (x-y)| \leq |\sigma || x-y| \leq rt \sqrt{\vert K \vert + j},$ and choosing $ p^{r,K}_{x,t} = p_{x, rt\sqrt{\vert K\vert+N} }.$   The constant $C_{n,M,N,s}$ is equal to
\begin{align*}
n^{M/2} l^{(1-s)/2}\int_0^{\infty} \vert \partial_r k_{1} (r) \vert \vert B(r) \vert r^{s+1} \, dr \sum_K  \sum_{j=0}^N \binom{N}{j}  \left(\frac{\vert K\vert +N}{\vert K\vert +j}\right)^{n/2} \frac{(\vert K\vert +N)^{s/2}}{(|K| + j)^{1/2}}.
\end{align*}
By choosing $l = nN,$ and noting that $|K| \asymp_N n,$ and $\sum_K 1 \asymp_N n^N$, we obtain 
\begin{align*}
C_{n,M,N,s} \lesssim_{s,N,M} n^{M/2} n^{(1-s)/2} n^{(1+s)/2} n^N n^{(s-1)/2} = n^{M/2 +N + s/2 +1/2}.
\end{align*} 
Thus,  $[ f ]_{B^{sM}_{pq}} \lesssim_{s,N,M} n^{c(2N,M,s)}  [ f ]_{A^{s,2N}_{pq}}$ for $c(2N, M,s) = M/2+N+s/2 + 1/2.$  Thus, we may take $c(N, M,s) = (M+N+s + 1)/2.$
\end{proof}

\section{Besov Spaces and Sobolev Spaces of Integer Order} \label{sobolevsection}
In this section we study embeddings of Sobolev spaces into Besov spaces. Let $Y$ be a Banach space,  $k$ be a nonegative integer, and let   $q \in [2, \infty ).$
The main goal of this section is to show that $W^{k,q}(\R^n; Y ) \hookrightarrow B^k_{qq}(\R^n ; Y)$ continuously if and only if $Y$ has martingale cotype $q.$

Our work relies heavily on earlier work of Xu in \cite{Xu}, and Mart\'inez, Torrea and Xu in \cite{MTX}, as well as the results by Hyt\"onen and Naor in \cite{HN}. One of the main  results obtained in \cite{MTX} (cf. Theorem 5.2) can be restated as follows:
\begin{equation*}
  [f]_{\tilde B^0_{qq}(\R^n;Y)}\lesssim \| f \|_{W^{0,q}(\R^n;Y)}= \| f \|_{L^q(\R^n;Y)}
\end{equation*}
if and only if  $Y$ has martingale cotype $q$. The only  difference to our setting so far, signified by the tilde in $\tilde B^0_{qq}$, is that  instead of $T_t,$ they use the subordinated Poisson semigroup $P_t$  defined by
\begin{align*}
P_t f :=  \int_0^{\infty}  \frac{e^{-u}}{\sqrt{\pi u}} T_{t^2/4u} f du.
\end{align*}
 Using the Poisson semigroup, the homogeneous norm for the Besov space takes the form
\begin{align*}
 [ f ]_{\tilde{B}^{sK}_{pq}} := \left (\int_0^{\infty}  t^{(K-s)q}\left \|   P_t^{(K)} f  \right \|_{L^p}^q \frac{dt}{t} \right)^{1/q}.
\end{align*}
It is classical that for $Y=\R, \C$ we have  the equivalence of norms $[ \cdot ]_{\tilde{B}^{sK}_{pq}} \asymp [ \cdot ]_{B^{sM}_{pq}},$ provided that $M>s/2,$ and $K>s$ (cf. \cite{Tri}, pp. 151-155). For our immediate purposes, the following simple lemma suffices.

\begin{lemma} \label{ip2} Let $Y$ be a Banach space, $p,q \in [1, \infty),$ and let $M > (s-1)/2,$ $K>s.$ Then for all $f \in L^p$  we have $[ f ]_{\tilde{B}^{sK}_{pq}}   \lesssim_{s,M}  [ f ]_{\tilde{B}^{s,K+1}_{pq}},$ and $[ f ]_{\tilde{B}^{s,2M}_{pq}} \lesssim_{s,M} [ f ]_{B^{sM}_{pq} }.$ 
\end{lemma}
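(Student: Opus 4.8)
The plan is to prove the two claimed estimates separately, both by the same elementary tricks already used in Lemma \ref{ip}: write the ``missing'' derivative (temporal for the first estimate, converting Poisson derivatives into heat derivatives for the second) as an integral of the next higher derivative, then pull the $L^p$-norm inside by Minkowski's integral inequality and rescale in $t$ to decouple the resulting one-parameter integral from the Besov seminorm.

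For the first estimate, $[ f ]_{\tilde{B}^{sK}_{pq}}\lesssim_{s,M}[ f ]_{\tilde{B}^{s,K+1}_{pq}}$, I would use that $P_t$ is a semigroup, so $P_t^{(K)}f=-\int_t^\infty P_u^{(K+1)}f\,du$ (the boundary term at $u=\infty$ vanishes for $f\in L^p$), exactly as in the proof of Lemma \ref{ip}. Substituting $u=vt$ and applying Minkowski's inequality in the $v$-integral gives
\begin{align*}
\left(\int_0^\infty t^{(K-s)q}\|P_t^{(K)}f\|_{L^p}^q\frac{dt}{t}\right)^{1/q}
&\le\int_1^\infty\left(\int_0^\infty t^{(K-s)q}\|P_{vt}^{(K+1)}f\cdot t\|_{L^p}^q\frac{dt}{t}\right)^{1/q}dv\\
&=\int_1^\infty v^{s-K-1}\,dv\left(\int_0^\infty t^{(K+1-s)q}\|P_t^{(K+1)}f\|_{L^p}^q\frac{dt}{t}\right)^{1/q},
\end{align*}
and the $v$-integral converges because $K>s$. (Here the hypothesis $M>(s-1)/2$ is not needed; it is only relevant for the second estimate, and $K>s$ is what makes the first one work.)

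For the second estimate, $[ f ]_{\tilde{B}^{s,2M}_{pq}}\lesssim_{s,M}[ f ]_{B^{sM}_{pq}}$, the key observation is that differentiating the subordination formula $P_tf=\int_0^\infty\frac{e^{-u}}{\sqrt{\pi u}}T_{t^2/4u}f\,du$ in $t$ produces heat-semigroup derivatives: each $\partial_t$ brings down a factor like $t/(2u)$ times $\dot T_{t^2/4u}$, so $P_t^{(2M)}f$ is a superposition, over $u$, of the quantities $(t^2/4u)^M T_{t^2/4u}^{(M)}f$ against a kernel in $u$ that decays like $e^{-u}$ times powers of $u$. One then puts the $L^p$-norm inside this $u$-integral by Minkowski, changes variables $t^2/4u=\tau$ in the inner Besov-type integral to pull out a convergent $u$-integral (using $M>(s-1)/2$, equivalently $2M>s-1$, to guarantee integrability near $u=0$ and $u=\infty$), and is left with $[f]_{B^{sM}_{pq}}$. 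The bookkeeping of the exact kernel in $u$ after $2M$ differentiations is the only mildly technical point, but it is entirely explicit: each derivative is a polynomial-in-$(t^2/u)$ times a heat derivative, and the worst power of $u$ is controlled by $e^{-u}$ at infinity and by the positive exponent $M-s/2$ (coming from the rescaling) at the origin.

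I expect no real obstacle here; the lemma is deliberately a ``cheap'' substitute for the classical norm equivalence $[\cdot]_{\tilde B^{sK}_{pq}}\asymp[\cdot]_{B^{sM}_{pq}}$, and only the one-sided inequalities in the stated direction are needed. The main thing to get right is matching the powers of $t$: after $2M$ Poisson derivatives we want the weight $t^{(2M-s)q}$, and after $M$ heat derivatives and the change of variables we want $\tau^{(M-s/2)q}$, and the substitution $\tau=t^2/(4u)$ with $dt/t=\tfrac12\,d\tau/\tau$ makes the exponents $2M-s$ and $2(M-s/2)=2M-s$ agree, confirming that the construction closes.
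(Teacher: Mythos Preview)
Your first estimate is fine and matches the paper verbatim: it is literally the first half of Lemma~\ref{ip} with $P_t$ in place of $T_t$ and the exponent $K-s$ in place of $M-s/2$.

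Your plan for the second estimate, however, has a genuine gap. Differentiating $T_{t^2/4u}$ in $t$ does \emph{not} produce a single term of the form $(t^2/4u)^M T^{(M)}_{t^2/4u}$. By Fa\`a di Bruno (with $\phi(t)=t^2/4u$, $\phi'=t/2u$, $\phi''=1/2u$, higher derivatives zero) one gets
\[
\partial_t^{2M} T_{t^2/4u}f=\sum_{k=M}^{2M} c_{M,k}\,\frac{t^{2(k-M)}}{u^{k}}\,T^{(k)}_{t^2/4u}f.
\]
If you now apply Minkowski and substitute $\tau=t^2/4u$, the weight $t^{2M-s}$ together with $t^{2(k-M)}/u^k$ becomes, for every $k$, a constant times $u^{-s/2}\tau^{k-s/2}$. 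So each term carries the same factor $u^{-s/2}$, and after multiplying by the subordination kernel $e^{-u}/\sqrt{\pi u}$ the $u$-integral is $\int_0^\infty u^{-(s+1)/2}e^{-u}\,du$, which diverges at $u=0$ whenever $s\geq 1$. Since the lemma is used in the paper with $s=k\geq 1$, your route does not close; the ``positive exponent $M-s/2$ at the origin'' you expect simply is not there once you have taken absolute values term by term.

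The paper avoids this by a one-line observation you are missing: the Poisson extension is harmonic, $\partial_t^2 P_t=-\Delta P_t$, hence $P_t^{(2M)}=(-\Delta)^M P_t$. The operator $\Delta^M$ is spatial, so it passes through the subordination integral and hits $T_{t^2/4u}$ directly:
\[
\Delta^M T_{t^2/4u}f = T^{(M)}_{t^2/4u}f,
\]
with no chain-rule factors in $t$ or $u$ at all. Then Minkowski and the substitution $\tau=t^2/4u$ give the $u$-kernel $(4u)^{M-s/2}e^{-u}/\sqrt{\pi u}$, whose integral converges exactly under the stated hypothesis $M>(s-1)/2$. This is the step that makes the lemma work in the full range; your direct $t$-differentiation recovers the same identity only after a cancellation that is destroyed by the triangle inequality.
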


\begin{proof}
The proof of the first part is exactly the same as the proof of the first part of Lemma \ref{ip}. For the second part we note that $P^{(2M)}_t f = \Delta^MP_t f.$ Thus,
\begin{align*}
[ f ]_{\tilde{B}^{s2M}_{pq}} &=  \left (\int_0^{\infty}  t^{(2M-s)q}\left \|   P_t^{(2M)} f  \right \|_{L^p}^q \frac{dt}{t} \right)^{1/q} \\
& = \left (\int_0^{\infty}  t^{(2M-s)q}\left \|   \Delta^M P_t f  \right \|_{L^p}^q \frac{dt}{t} \right)^{1/q} \\
&= \left (\int_0^{\infty}  t^{(2M-s)q}\left \|   \int_0^{\infty}  \frac{e^{-u}}{\sqrt{\pi u}} \Delta^M  T_{t^2/4u} f du \right \|_{L^p}^q \frac{dt}{t} \right)^{1/q} \\
& \leq   \int_0^{\infty}  \frac{e^{-u}}{\sqrt{\pi u}}   \left (\int_0^{\infty}  t^{(2M-s)q}\left \|  \Delta^M  T_{t^2/4u} f \right \|_{L^p}^q \frac{dt}{t} \right)^{1/q}  du \\
& = \int_0^{\infty}  \frac{e^{-u}}{2\sqrt{\pi u}} (4u)^{M-s/2}  du   \left (\int_0^{\infty}  t^{(M-s/2)q}\left \|   T^{(M)}_{t} f \right \|_{L^p}^q \frac{dt}{t} \right)^{1/q} \\
& \lesssim_{s,M} [ f ]_{B^{sM}_{pq} }.
\end{align*}
\end{proof}
We also need the following lemma. For the proof we require the integral representation
\begin{align*}
P_t f = \frac{1}{\pi}\int_{\R}  \frac{t}{t^2+|y|^2} f(x-y) \,dy.
\end{align*}

\begin{lemma} \label{partial}
For all $k \geq 0$ and for all $f \in W^{k,q} (\R ; Y)$  we have
\begin{align*}
\left (\int_0^{\infty}  t^{q}\left \|   \dot{ P}_t \partial_x^k f  \right \|_{L^p}^q \frac{dt}{t} \right)^{1/q} \lesssim \left (\int_0^{\infty}  t^{q}\left \|   P_t^{(k+1)} f  \right \|_{L^p}^q \frac{dt}{t} \right)^{1/q}.
\end{align*}
\end{lemma}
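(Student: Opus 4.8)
The plan is to trade spatial derivatives for temporal ones using harmonicity of the Poisson extension. Since $(x,t)\mapsto P_tf(x)$ is harmonic on $\R\times(0,\infty)$, we have $\partial_x^2P_tf=-\partial_t^2P_tf=-P_t^{(2)}f$, and iterating, $\partial_x^{2m}P_tf=(-1)^mP_t^{(2m)}f$ for every $m\ge0$. Writing $k=2m+r$ with $r\in\{0,1\}$ and using that $\partial_x^j$ commutes with $P_t$ on $W^{k,q}(\R;Y)$ for $j\le k$, we get
\[
\dot P_t\partial_x^kf=\partial_x^r\,\partial_t\bigl(\partial_x^{2m}P_tf\bigr)=(-1)^m\,\partial_x^r P_t^{(2m+1)}f .
\]
For even $k$ (the case $r=0$) this is already the \emph{identity} $\dot P_t\partial_x^kf=(-1)^{k/2}P_t^{(k+1)}f$, so the asserted estimate holds with constant one. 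The entire content of the lemma is therefore the odd case $k=2m+1$, in which a single factor $\partial_x$ survives and must be turned into a time derivative. One cannot do this directly, because relating $\partial_xP_t$ to $\partial_tP_t$ requires the Hilbert transform, which is unbounded on $L^p(\R;Y)$ for a general Banach space $Y$. Instead I will absorb the leftover $\partial_x$ into a factor $t^{-1}$ and recover this loss via a Hardy inequality in $t$.

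For the odd case, the first ingredient is the gradient bound $\|\,\tau\,\partial_xP_\tau g\,\|_{L^p}\lesssim\|g\|_{L^p}$, valid for every $g\in L^p(\R;Y)$: it follows by differentiating the subordination formula $P_\tau g=\int_0^\infty\frac{e^{-u}}{\sqrt{\pi u}}T_{\tau^2/4u}g\,du$ and invoking Lemma~\ref{nabla} with $n=1$, which gives $\|\partial_xT_\sigma g\|_{L^p}\lesssim\sigma^{-1/2}\|g\|_{L^p}$. Combining this with the semigroup identity $P_u^{(k+1)}f=P_{u/2}\bigl(P_{u/2}^{(k+1)}f\bigr)$ (all the derivatives in $\partial_t^{k+1}$ placed on one factor of $P_u=P_{u/2}\circ P_{u/2}$) yields the pointwise-in-$u$ bound
\[
\bigl\|\partial_xP_u^{(k+1)}f\bigr\|_{L^p}\lesssim u^{-1}\bigl\|P_{u/2}^{(k+1)}f\bigr\|_{L^p}.
\]
The second ingredient is the representation $P_t^{(k)}f=-\int_t^\infty P_u^{(k+1)}f\,du$, legitimate since $k=2m+1\ge1$ and the scaling of the one-dimensional Poisson kernel gives $\|P_u^{(j)}f\|_{L^p}\lesssim_j u^{-j}\|f\|_{L^p}\to0$ for $j\ge1$; the same decay lets us move $\partial_x$ inside, so $\partial_xP_t^{(k)}f=-\int_t^\infty\partial_xP_u^{(k+1)}f\,du$.

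Putting the pieces together, $\|t\,\dot P_t\partial_x^kf\|_{L^p}=t\,\|\partial_xP_t^{(k)}f\|_{L^p}\lesssim t\int_t^\infty u^{-1}\|P_{u/2}^{(k+1)}f\|_{L^p}\,du$. Raising to the $q$-th power, integrating against $\frac{dt}{t}$, applying the weighted Hardy inequality $\int_0^\infty t^{q-1}\bigl(\int_t^\infty h(u)\,du\bigr)^q\,dt\le\int_0^\infty t^{2q-1}h(t)^q\,dt$ (whose constant equals $1$ for this pair of weights) with $h(u)=u^{-1}\|P_{u/2}^{(k+1)}f\|_{L^p}$, and finally substituting $t\mapsto2t$, we obtain
\[
\int_0^\infty t^q\bigl\|\dot P_t\partial_x^kf\bigr\|_{L^p}^q\frac{dt}{t}\lesssim\int_0^\infty t^q\bigl\|P_t^{(k+1)}f\bigr\|_{L^p}^q\frac{dt}{t},
\]
with an absolute implied constant (coming only from Lemma~\ref{nabla} and the dyadic rescaling). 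The delicate point is precisely the odd case, and within it the bookkeeping ``pay a factor $t^{-1}$ for the surviving spatial derivative, recoup it by Hardy in $t$''; the rest is the harmonicity identity together with standard heat/Poisson semigroup estimates already established in the paper.
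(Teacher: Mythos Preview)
Your argument is correct and is essentially the paper's proof unpacked: the paper also splits into even/odd $k$ via harmonicity, and for odd $k$ reduces to $k=1$, invokes the first part of Lemma~\ref{ip2} (which is exactly your integrate-up/Hardy step) to pass from $\dot P_t$ to $\ddot P_{2t}$, and then applies the same gradient bound $\|t\,\partial_x P_t g\|_{L^p}\lesssim\|g\|_{L^p}$ after the semigroup split $\ddot P_{2t}\partial_x f=\partial_x P_t\,\ddot P_t f$. The only cosmetic differences are that you derive the gradient bound from subordination plus Lemma~\ref{nabla} rather than directly from the one-dimensional Poisson kernel, and that you keep general odd $k$ instead of first reducing to $k=1$.
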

\begin{proof}
If $k$ is even, then $P_t^{(k+1)} f = \dot{ P}_t \partial_x^k f,$ and we have an equality. If $k$ is odd, then by writing $P_t^{(k+1)} f = \ddot{ P}_t \partial_x^{k-1} f$ the lemma is reduced to the case $k=1.$ By the first part of Lemma \ref{ip2} we have
\begin{align*}
\left (\int_0^{\infty}  t^{q}\left \|   \dot{ P}_t \partial_x f  \right \|_{L^p}^q \frac{dt}{t} \right)^{1/q} \lesssim \left (\int_0^{\infty}  t^{2q}\left \|  \ddot{ P}_{2t} \partial_x f  \right \|_{L^p}^q \frac{dt}{t} \right)^{1/q}.
\end{align*}
By the semigroup property of $P_t$ we have $\ddot{ P}_{2t} \partial_x f = \partial_x P_t \ddot{ P}_{t} f.$ Thus, the lemma follows once we show that $\| t \partial_x P_t f \|_{L^p} \lesssim \| f \|_{L^p}.$ We have
\begin{align*}
\| t \partial_x P_t f \|_{L^p} &= \left \|  \frac{1}{\pi} \int_{\R} \partial_x \frac{ t^2}{t^2+(x-y)^2} f(y) \,dy \right \|_{L^p} 
= \left \| \frac{1}{\pi}  \int_{\R}  \frac{2 yt^2}{(t^2+y^2)^2} f(x-y) \,dy \right \|_{L^p} \\
& \lesssim \int_0^{\infty} \frac{2yt^2}{(t^2+y^2)^2} \,dy \, \| f \|_{L^p} =  \| f \|_{L^p},
\end{align*}
which completes the proof.
\end{proof}

We are now ready for the main theorem of this section, which states that a Banach space $Y$ has martigale cotype $q$ if and only if
$[f]_{B^{k}_{qq}(\R^n; Y}\lesssim\| f \|_{W^{k,q}(\R^n; Y)}$ for all $f\in W^{k,q}(\R^n;Y)$.
The proof of the direction  $(\Rightarrow)$ follows easily  from Littlewood-Paley-Stein inequalities proven in \cite{HN}. The proof of the direction $(\Leftarrow)$ follows by a reduction to the case $k=0$, which has already been established in \cite{MTX} and \cite{Xu}.


For the theorem, we define $\mathfrak{b}(k,q,n,Y)$ to be the best constant such that for all $f \in W^{k,q}(\R^n;Y)$
\begin{align}\label{eq:BvsW}
[ f ]_{B^{kM}_{qq}(\R^n;Y)}  \leq  \mathfrak{b}(k,q,n,Y) \| f \|_{W^{k,q}(\R^n;Y)},
\end{align}
where $M$ is the smallest integer $> k/2.$

\begin{theorem}  \label{mart}
Let $Y$ be a Banach space, and let $k \geq 0,$ $n \geq 1,$ $q \in [2, \infty).$ Then $Y$ has martigale cotype $q$ if and only if \eqref{eq:BvsW} holds.
Furthermore, 
\begin{align*}
  \mathfrak{b}(k,q,n,Y)  \lesssim  \mathfrak{m}(q,Y) \sqrt{n}, \quad \text{and} \quad \mathfrak{m}(q,Y) \lesssim \mathfrak{b}(k,q,1,Y) +C \lesssim \mathfrak{b}(k,q,n,Y) +C,
\end{align*}
 where the implied constants, and the constant $C$ do not depend on any of the parameters $k,q,n,Y$.
\end{theorem}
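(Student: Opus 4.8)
\textbf{Proof proposal for Theorem \ref{mart}.}

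The plan is to prove the two directions separately, and in both cases to reduce the multidimensional statement to a one-dimensional statement via a trick that exploits the tensor structure of the heat semigroup and the fact that martingale cotype is a one-dimensional notion.

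\emph{The direction $(\Rightarrow)$, together with the quantitative bound $\mathfrak{b}(k,q,n,Y)\lesssim\mathfrak{m}(q,Y)\sqrt n$.} Suppose $Y$ has martingale cotype $q$. I would start from the Besov seminorm $[f]_{B^{kM}_{qq}}$ with $M$ the smallest integer $>k/2$, and peel off $k$ spatial derivatives at the cost of moving to derivatives of $f$: using the semigroup property write $T_t^{(M)}f=\dot T_{t/2}^{(M-\lceil k/2\rceil)}$ applied to a heat-regularized version of $\partial_x^\alpha f$, with $|\alpha|\le k$; more precisely one wants to arrange the temporal/spatial derivatives so that what remains inside is exactly one factor of the form $t\dot T_t$ or $\sqrt t\,\mathrm{div}\,T_t$ acting on some $T_t$-smoothed derivative $\partial_x^\alpha f$ of order $\le k$, and a harmless string of extra $T_t$ factors that one kills with Lemma \ref{dt} (each costing $\sqrt n$ but only a bounded number of times, depending on $k,M$ only). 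Then apply Theorem \ref{LPS}: the first inequality there handles the $t\dot T_t$ factor and the second (with $\vec f$ built from first-order partials of a $(k-1)$-st order derivative) handles the $\sqrt t\,\mathrm{div}\,T_t$ factor, each producing the gain $\mathfrak{m}(q,Y)\sqrt n$ and leaving $\|\partial_x^\alpha f\|_{L^q}\le\|f\|_{W^{k,q}}$. Keeping track, all constants except one power of $\sqrt n$ depend only on $k$, yielding $\mathfrak{b}(k,q,n,Y)\lesssim\mathfrak{m}(q,Y)\sqrt n$, which in particular gives $(\Rightarrow)$.

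\emph{The direction $(\Leftarrow)$, together with $\mathfrak{m}(q,Y)\lesssim\mathfrak{b}(k,q,1,Y)+C$ and the monotonicity $\mathfrak{b}(k,q,1,Y)\lesssim\mathfrak{b}(k,q,n,Y)$.} For the monotonicity, given a one-dimensional $g\in W^{k,q}(\R;Y)$ I would tensor it against a fixed nice bump in the remaining $n-1$ variables, $f(x)=g(x_1)\prod_{j\ge2}\varphi(x_j)$; since $T_t$ on $\R^n$ factors as a product of one-dimensional heat semigroups, both $[f]_{B^{kM}_{qq}(\R^n)}$ and $\|f\|_{W^{k,q}(\R^n)}$ reduce to the corresponding one-dimensional quantities for $g$ up to constants depending only on $\varphi$ and $k$, giving $\mathfrak{b}(k,q,1,Y)\lesssim\mathfrak{b}(k,q,n,Y)$. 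It thus remains to show $\mathfrak{m}(q,Y)\lesssim\mathfrak{b}(k,q,1,Y)+C$. The idea is to reduce from order $k$ to order $0$ on $\R$: given $h\in L^q(\R;Y)$, I want to produce $f\in W^{k,q}(\R;Y)$ with $\partial_x^k f$ essentially equal to $h$ (e.g.\ $f$ an antiderivative of $h$ times a cutoff, or better, convolve $h$ with a compactly supported kernel so that $f$ has $k$ weak derivatives and $\|f\|_{W^{k,q}}\lesssim\|h\|_{L^q}+\|f\|_{L^q}$); then compare the heat-semigroup Besov seminorm with the Poisson-semigroup seminorm $[\cdot]_{\tilde B^0_{qq}}$ using Lemma \ref{ip2} and Lemma \ref{partial}, landing on $[h]_{\tilde B^0_{qq}(\R;Y)}\lesssim[f]_{B^{kM}_{qq}(\R;Y)}+(\text{lower order})\lesssim\mathfrak{b}(k,q,1,Y)\|f\|_{W^{k,q}}+\cdots\lesssim(\mathfrak{b}(k,q,1,Y)+C)\|h\|_{L^q}$; finally invoke the Mart\'inez--Torrea--Xu / Xu theorem stating that $[\cdot]_{\tilde B^0_{qq}(\R;Y)}\lesssim\|\cdot\|_{L^q(\R;Y)}$ is equivalent to $Y$ having martingale cotype $q$, with comparable constants, to conclude $\mathfrak{m}(q,Y)\lesssim\mathfrak{b}(k,q,1,Y)+C$.

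\emph{Main obstacle.} I expect the genuinely delicate point to be the passage from $k$-th order to $0$-th order smoothness on $\R$ while controlling all constants: one must manufacture $f$ from $h$ so that simultaneously (i) $\partial_x^k f$ recovers $h$ up to a Schwartz/compactly-supported error whose own Besov and $L^q$ norms are harmlessly bounded, (ii) the non-leading terms $\partial_x^\alpha f$, $|\alpha|<k$, appearing in $\|f\|_{W^{k,q}}$ are controlled by $\|h\|_{L^q}$ (which forces a good choice of the convolution kernel, since a naive antiderivative is not in $L^q$), and (iii) the switch between the heat and Poisson Besov seminorms via Lemmata \ref{ip2} and \ref{partial} only introduces absolute constants and lower-order contributions. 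Once this bookkeeping is done carefully, combining it with the cited $k=0$ result of \cite{Xu,MTX} and the tensorization argument gives the full chain of inequalities in the statement.
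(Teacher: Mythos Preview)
Your overall strategy matches the paper's four-step structure (forward direction via Littlewood--Paley--Stein, dimension reduction by tensoring, smoothness reduction to $k=0$, then cite \cite{Xu,MTX}), but two steps are executed differently, and one of your worries is precisely the point where the paper does something slicker.

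\textbf{Forward direction.} Your sketch speaks of ``a harmless string of extra $T_t$ factors that one kills with Lemma \ref{dt} (each costing $\sqrt n$\,)''; if there were several such applications you would \emph{not} end up with a single $\sqrt n$. The paper avoids this entirely: since $M$ is the \emph{smallest} integer $>k/2$, one has $M-k/2\in\{1/2,1\}$, so the Besov integrand is exactly $t\dot T_t(\Delta^{k/2}f)$ (even $k$) or $\sqrt t\,\mathrm{div}\,T_t(\nabla\Delta^{(k-1)/2}f)$ (odd $k$), and a \emph{single} application of Theorem \ref{LPS} gives $\mathfrak m(q,Y)\sqrt n$ with no leftover factors.

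\textbf{Dimension reduction.} You tensor with a fixed bump $\varphi$; the paper tensors with a stretched and $L^q$-normalized bump $\phi_N$ and sends $N\to\infty$, which makes all cross terms $\|T_t^{(j)}f\|\cdot\|T_t^{(M-j)}\phi_N\|$ (for $j<M$) vanish in the limit. Either route is fine for the theorem as stated (the additive $+C$ absorbs the fixed-bump cross terms), but the limiting version makes the $k$-independence of constants transparent.

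\textbf{Smoothness reduction.} Here is the real divergence. You propose to manufacture $f\in W^{k,q}(\R;Y)$ with $\partial_x^k f\approx h$ from a given $h\in L^q$, and you correctly flag the obstacle that antiderivatives of $L^q$ functions need not lie in $L^q$. The paper sidesteps this completely by a \emph{scaling argument}: starting from the inequality
\[
\Big(\int_0^\infty t^q\|\dot P_t\partial_x^k f\|_{L^q}^q\,\frac{dt}{t}\Big)^{1/q}\ \lesssim\ \sum_{0\le\alpha\le k}\|\partial_x^\alpha f\|_{L^q}
\]
(obtained from the hypothesis via Lemmata \ref{partial} and \ref{ip2}), substitute $f(\lambda\cdot)$, use $\partial_x^\alpha[f(\lambda\cdot)]=\lambda^\alpha(\partial_x^\alpha f)(\lambda\cdot)$ and the scale-invariance of the left side, divide by $\lambda^{k-1/q}$, and let $\lambda\to\infty$. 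All terms with $\alpha<k$ disappear, leaving the order-$0$ estimate for $g=\partial_x^k f$; density of such $g$ in $L^q$ (e.g.\ via functions with Fourier support away from the origin) finishes. This homogeneity trick is the key idea you are missing; it removes the need for any delicate antiderivative construction and keeps all constants absolute.
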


\begin{proof}
(i) $Y$ has  martingale cotype $q$ $\Rightarrow$ $\mathfrak{b}(k,q,n,Y)  \lesssim  \mathfrak{m}(q,Y) \sqrt{n}:$

 If $k$ is even, then  the smallest $M> k/2$  is $M = k/2+1,$ and we have by Theorem \ref{LPS}
 \begin{align*}
 \| f \|_{B^{kM}_{qq}} & = \left (\int_0^{\infty}  t^{q}\left \|   T_t^{(k/2+1)} f  \right \|_{L^q}^q \frac{dt}{t} \right)^{1/q} \\
 & =\left (\int_0^{\infty}  \left \|   t \dot{T}_t \Delta^{k/2} f  \right \|_{L^q}^q \frac{dt}{t} \right)^{1/q} \\
 & \lesssim \mathfrak{m}(q,Y)  \sqrt{n} \, \| \Delta^{k/2} f \|_{L^q} \leq \mathfrak{m}(q,Y)\sqrt{n} \,\| f \|_{W^{k,q}}.
 \end{align*}
 
 If $k$ is odd, then  the smallest $M> k/2$  is  $M=(k+1)/2,$ and we have by Theorem \ref{LPS}
 \begin{align*}
  \| f \|_{B^{kM}_{qq}} & = \left (\int_0^{\infty}  t^{q/2}\left \|   T_t^{(k+1)/2} f  \right \|_{L^q}^q \frac{dt}{t} \right)^{1/q} \\
 & =\left (\int_0^{\infty}  \left \|   t^{1/2} \dot{T}_t \Delta^{(k-1)/2} f  \right \|_{L^q}^q \frac{dt}{t} \right)^{1/q} \\
 & = \left (\int_0^{\infty}  \left \|   t^{1/2} \text{div} T_t (\nabla ( \Delta^{(k-1)/2} f )) \right \|_{L^q}^q \frac{dt}{t} \right)^{1/q} \\
 & \lesssim \mathfrak{m}(q,Y) \sqrt{n} \left(  \fint_{S^{n-1}} \| (\sigma \cdot \nabla) ( \Delta^{(k-1)/2} f ) \|_{L^q}^q d\sigma \right)^{1/q}  \\
 &\leq \mathfrak{m}(q,Y) \sqrt{n} \,\| f \|_{W^{k,q}}.
 \end{align*}
 
(ii) Reduction of dimension: 
$\mathfrak{b}(k,q,1,Y) \lesssim \mathfrak{b}(k,q,n,Y)$: 

We first require a smooth approximation of the function $N^{(1-n)/q} \chi_{[0,N]^{n-1}}$ on $\R^{n-1};$ let $\theta:\R \to \R$ be a smooth function such that $\theta(x) = 1$ for $ x \in[0,1],$ and $\text{supp}( \theta) \subset [-1,2].$ For $N \geq 1,$ let 
\begin{align*}
\theta_N(x) := \begin{cases} N^{-1/q}, &\text{ if } x \in [0,N] \\
N^{-1/q} \theta (x-N+1), &\text{ if } x \in [N,N+1] \\
N^{-1/q} \theta(x), &\text{ if } x \in [-1,0], \\
0, &\text{ if } x \notin [-1,N+1].
\end{cases}
\end{align*}
Denote $\tilde{x} := (x_2,x_3, \dots x_n)\in \R^{n-1},$ and define $\phi_N(\tilde{x}) := \theta_N(x_2)\theta_N(x_3) \cdots \theta_N(x_n).$ It is then clear that for all $t \in [0,1],$ $\alpha=1,2, \dots, 2M,$ and $j=1,2, \dots, M$ 
\begin{align*}
& \| \phi_N\|_{L^q} \to 1, \quad \quad \|T_t \phi_N\|_{L^q} \to 1, \\
& \| \partial_x^{\alpha} \phi_N\|_{L^q} \to 0, \quad  \| T^{(j)}_{t} \phi_N\|_{L^q} \leq \| \Delta^j \phi_N\|_{L^q} \to 0,
\end{align*}
uniformly as $N \to \infty.$ Note that above $T_t$ is the heat semigroup on $\R^{n-1}.$ Below the heat semigroup is always of the same dimension as the domain of the function it acts on.

Let $f \in W^{k,q}(\R;Y).$ Write $(f\phi_N) (x) := f(x_1) \phi_N(\tilde{x})$ for $x \in \R^{n}.$  We have $T_t (f\phi_N) =T_t (f)T_t (\phi_N).$ Thus, for large enough $N,$ and for all $t \in [0,1]$ we have
\begin{align}
(1+o(1)) \|T_t^{(M)}f \|_{L^q} &= \|T^{(M)}_t f\|_{L^q}\|T_t \phi_N\|_{L^q} \nonumber \\
& \leq \| T^{(M)}_t (f \phi_N)\|_{L^q} + \sum_{j<M} \binom{M}{j}\|T^{(j)}_t f\|_{L^q}\|T^{(M-j)}_t \phi_N\|_{L^q} \nonumber \\
& \leq \| T^{(M)}_t (f \phi_N)\|_{L^q} + \sum_{j<M} \binom{M}{j}\|T^{(j)}_t f\|_{L^q}\| \Delta^{(M-j)} \phi_N\|_{L^q}. \label{helpful}
\end{align}

Let $j < M.$ If $j \leq k/2,$ then $\|T^{(j)}_t f\|_{L^q}\| \leq \|\Delta^{j} f\|_{L^q} \leq \|f\|_{W^{k,q}},$ and we find that
\begin{align} \label{jestimate}
\left( \int_0^1 t^{(M-k/2)q} \|T^{(j)}_tf\|^q_{L^q} \frac{dt}{t} \right)^{1/q} \lesssim \|f\|_{W^{k,q}}.
\end{align}
If $j>k/2,$ then for even $k$ we use Lemma \ref{dt} $(j-k/2)$ times to obtain
\begin{align*}
\left( \int_0^1 t^{(M-k/2)q} \|T^{(j)}_tf\|^q_{L^q} \frac{dt}{t} \right)^{1/q} \lesssim \left( \int_0^1 t^{(M-j)q} \|T^{ (k/2) }_t f\|^q_{L^q} \frac{dt}{t} \right)^{1/q} \lesssim \|f\|_{W^{k,q}}.
\end{align*}
For $j>k/2,$ and odd $k$ we write $T^{(k)}_t f = \text{div}T^{(k-1)} (\partial_x f),$ and use Lemmata \ref{div} and \ref{dt} to obtain
\begin{align*}
\left( \int_0^1 t^{(M-k/2)q} \|T^{(j)}_tf\|^q_{L^q} \frac{dt}{t} \right)^{1/q} \lesssim \left( \int_0^1 t^{(M-j)q} \|T^{ ((k-1)/2) }_t \partial_x f\|^q_{L^q} \frac{dt}{t} \right)^{1/q} \lesssim \|f\|_{W^{k,q}}.
\end{align*}  
Thus, the estimate (\ref{jestimate}) holds for all $j<M.$  Hence, by integrating both sides of (\ref{helpful}), and using (\ref{jestimate}) we get
\begin{align*}
 [ f ]_{B^{kM}_{qq}} &\lesssim \|f\|_{L^q} + \left( \int_0^1 t^{(M-k/2)q} \|T^{(M)}_tf\|^q_{L^q} \frac{dt}{t} \right)^{1/q} \\
& \lesssim \|f\|_{L^q}  + \|f\phi_N\|_{B^{kM}_{qq}} + \sum_{j<M} \binom{M}{j}\| f\|_{W^{k,q}}\| \Delta^{(M-j)} \phi_N\|_{L^q} \\
& \lesssim \|f\|_{L^q}  + \|f\phi_N\|_{W^{k,q}} + \| f\|_{W^{k,q}}\, \lesssim \, \| f\|_{W^{k,q}}.
\end{align*}
By letting $N$ be large, we see that the implied constant of $\|f\|_{B^{kM}_{qq}} \lesssim \| f\|_{W^{k,q}}$ is $\mathfrak{b}(k,q,n,Y)$ up to an absolute constant.
  
(iii) Reduction of smoothness: 
$[f]_{B^{kM}_{qq}(\R;Y)}\lesssim \| f \|_{W^{k,q}(\R^n;Y)}$ $\Rightarrow$  $[f]_{\tilde B^{01}_{qq}(\R;Y)}\lesssim \| f \|_{L^q(\R^n;Y)}$:

 Using Lemmata \ref{partial} and \ref{ip2} we now obtain
 \begin{align*}
 \left (\int_0^{\infty}  t^{q}\left \|   \dot{ P}_t \partial_x^k f  \right \|_{L^q}^q \frac{dt}{t} \right)^{1/q} & \lesssim \| f \|_{\tilde{B}^{k,k+1}_{qq}}  \\ 
 & \lesssim \begin{cases}   & \| f \|_{\tilde{B}^{k,k+1}_{qq}}, \text{ if $k$ is odd,} \\  
 & \| f \|_{\tilde{B}^{k,k+2}_{qq}}, \text{ if $k$ is even,} 
  \end{cases} \\
  & \lesssim \begin{cases}   & \| f \|_{B^{k,(k+1)/2}_{qq}}, \text{ if $k$ is odd,} \\  
 & \| f \|_{B^{k,k/2+1}_{qq}}, \text{ if $k$ is even,} 
  \end{cases}  \\
  & \lesssim \| f \|_{W^{k,q}} = \sum_{0 \leq   \alpha  \leq k} \| \partial_x^{\alpha} f\|_{L^q}.
 \end{align*}
 If we substitute $f(\lambda\cdot)$ in place of $f$ and use the easy scaling properties $\partial_x^\alpha[f(\lambda\cdot)]=\lambda^\alpha(\partial_x^\alpha f)(\lambda\cdot)$, $ t\dot P_t[f(\lambda\cdot)]=\lambda t(\dot P_{\lambda t}f)(\lambda\cdot)$ and $\| f(\lambda\cdot)\|_{L^q}=\lambda^{-1/q}\| f\|_{L^q}$, we arrive at
 \begin{equation*}
  \lambda^{k-1/q} \left (\int_0^{\infty}  t^{q} \| \dot{ P}_t \partial_x^k f (x) \|_{L^q} ^q \frac{dt}{t} \right)^{1/q} \lesssim \sum_{0 \leq \alpha  \leq k}
    \lambda^{\alpha-1/q} \| \partial_x^{\alpha} f\|_{L^q}. 
\end{equation*}
Multiplying both sides by $\lambda^{1/q-k}$ and letting $\lambda\to\infty$, we find that all terms except $\|\partial_x^k f\|_{L^q}$ disappear on the right, so that in fact
\begin{equation}\label{eq:MTXembedding}
   \left (\int_0^{\infty}  t^{q} \| \dot{ P}_t g \|_{L^q} ^q \frac{dt}{t} \right)^{1/q} \lesssim  \| g\|_{L^q}
\end{equation}
for all functions of the form $g=\partial_x^k f$ for some $f\in W^{k,q}(\R;Y)$. We note that such functions are dense in $L^q(\R;Y)$; for instance, this follows from the density of smooth functions whose Fourier transform is compactly supported away from the origin (see \cite{HNVW}, Proposition 2.4.23(ii)) and the easy observation that each such function can be expressed in the required form. From the fact that \eqref{eq:MTXembedding} holds in a dense class of functions $g\in L^q(\R;Y)$ and the $L^q(\R;Y)$-boundedness of $\dot P_t$ for each fixed $t$, it is easy to deduce that \eqref{eq:MTXembedding} extends to all functions $g\in L^q(\R;Y)$. But this is precisely the claimed embedding $L^{q}(\R; Y) \hookrightarrow \tilde B^{01}_{qq}(\R; Y)$.

(iv) $[f]_{\tilde B^{01}_{qq}(\R;Y)}\lesssim \| f \|_{L^q(\R^n;Y)}$ $\Rightarrow$ $Y$ has  martingale cotype $q:$  This is a combination of earlier results by Mart\'inez, Torrea and Xu \cite{MTX} and Xu \cite{Xu}: Theorem 5.2 of \cite{MTX} guarantees that $[f]_{\tilde B^{01}_{qq}(\R;Y)}\lesssim \| f \|_{L^q(\R^n;Y)}$ if and only if $Y$ has so-called Lusin cotype $q$, and Theorem 3.1 of \cite{Xu} gives the equivalence of Lusin cotype $q$ and martingale cotype $q$, with quantitative bounds for the relevant constants in both cases. We refer to the cited papers for the definition of Lusin cotype, as we only need this intermediate notion as a black box in the mentioned implications.
\end{proof}

By Theorem \ref{AequalsB}, we now obtain the following corollary. For the corollary, we define $\mathfrak{a}(k,N,q,n,Y)$ to be the best constant such that
\begin{align*}
\| f \|_{A^{kN}_{qq}(\R^n;Y)}  \leq  \mathfrak{a}(k,N,q,n,Y) \| f \|_{W^{k,q}(\R^n;Y)}
\end{align*}
holds. 
\begin{cor} \label{mart2}
Let $Y$ be a Banach space, and let $k \geq 1,$  $q \in [2, \infty).$ Then $Y$ has martigale cotype $q$ if and only if $W^{k,q}(\R^n; Y) \hookrightarrow A^{k}_{qq}(\R^n; Y)$ continuously. Furthermore, if either one holds, then 
\begin{align*}
&\mathfrak{a}(k,N,q,n,Y) \lesssim_{k,N}  \mathfrak{m}(q,Y) \sqrt{n}, \quad {and} \\ & \mathfrak{m}(q,Y)  \lesssim_{k,N} \mathfrak{a}(k,N,q,1,Y) + C \lesssim_{k,N} \mathfrak{a}(k,N,q,n,Y) + C
\end{align*}
 for all $N \geq k.$ Here $C$ is the same absolute constant as in Theorem \ref{mart}.
\end{cor}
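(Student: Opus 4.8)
The plan is to read Corollary \ref{mart2} off from Theorem \ref{AequalsB} together with Theorem \ref{mart}; the only input not already contained in these is a dimension-reduction for the local approximation seminorm, which I isolate at the end.

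Most of the inequalities are immediate. Fix $M$ to be the least integer $>k/2$ (as in the definition of $\mathfrak b(k,q,n,Y)$). By Theorem \ref{AequalsB}, $[f]_{A^{kN}_{qq}}\lesssim_{k,N}[f]_{B^{kM}_{qq}}$ with a constant independent of $n$, so $\|f\|_{A^{kN}_{qq}}=\|f\|_{L^q}+[f]_{A^{kN}_{qq}}\lesssim_{k,N}(1+\mathfrak b(k,q,n,Y))\|f\|_{W^{k,q}}$, i.e. $\mathfrak a(k,N,q,n,Y)\lesssim_{k,N}1+\mathfrak b(k,q,n,Y)\lesssim_{k,N}1+\mathfrak m(q,Y)\sqrt n\asymp \mathfrak m(q,Y)\sqrt n$, where the second step is Theorem \ref{mart} and the last uses $\mathfrak m(q,Y)\ge 1$ (test \eqref{mcotype} on a one-step Rademacher martingale). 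In dimension $n=1$ the factor $n^{c(N,M,k)}$ in Theorem \ref{AequalsB} equals $1$, so also $[f]_{B^{kM}_{qq}(\R)}\lesssim_{k,N}[f]_{A^{kN}_{qq}(\R)}$; hence $\mathfrak b(k,q,1,Y)\lesssim_{k,N}\mathfrak a(k,N,q,1,Y)$, which combined with $\mathfrak m(q,Y)\lesssim\mathfrak b(k,q,1,Y)+C$ from Theorem \ref{mart} gives $\mathfrak m(q,Y)\lesssim_{k,N}\mathfrak a(k,N,q,1,Y)+C$. Together with the crude bound $\mathfrak b(k,q,n,Y)\lesssim_{k,N}n^{c(N,M,k)}\mathfrak a(k,N,q,n,Y)$ (used only to see that finiteness of $\mathfrak a(k,N,q,n,Y)$ forces martingale cotype $q$), this already proves the qualitative equivalence, the first displayed bound of the corollary, its first inequality, and --- via Theorem \ref{AequalsB} once more --- that $A^k_{qq}$ does not depend on $N\ge k$.

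It remains to prove $\mathfrak a(k,N,q,1,Y)+C\lesssim_{k,N}\mathfrak a(k,N,q,n,Y)+C$, i.e. a reduction of dimension for $\mathfrak a$ with an $n$-independent constant, in the spirit of step (ii) of the proof of Theorem \ref{mart}. Given $f\in W^{k,q}(\R;Y)$ I would embed it along the diagonal: set $g(y):=f(y\cdot\mathbf 1)\,\Phi(y)$, $\mathbf 1=(1,\dots,1)$, with $\Phi$ a smooth cutoff equal to a fixed constant on a long wide cylinder with axis $\R\mathbf 1$, all derivatives small, normalised so that the layer-cake weight $u\mapsto\int_{\{y\cdot\mathbf 1=u\}}|\Phi|^q\,d\mathcal H^{n-1}$ is essentially $1$ on $\operatorname{supp}f$; then $\|g\|_{W^{k,q}(\R^n;Y)}\to\|f\|_{W^{k,q}(\R;Y)}$ just as for the bump $\phi_N$ in Theorem \ref{mart}. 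The reason for the diagonal orientation is geometric: the push-forward of normalised Lebesgue measure on $B_{\R^n}(x,t)$ under $y\mapsto y\cdot\mathbf 1$ has density $\propto\big(nt^2-(u-x\cdot\mathbf 1)^2\big)^{(n-1)/2}$ on $|u-x\cdot\mathbf 1|<t\sqrt n$, essentially a Gaussian of width $t$ and height $\asymp 1/t$, so it puts mass $\gtrsim 1$, uniformly in $n$, on $|u-x\cdot\mathbf 1|\le t$. Consequently, on any ball $B(x,t)$ inside the region where $\Phi$ is constant, averaging a degree-$\le N$ polynomial $P$ on $\R^n$ over the parallel $(n-1)$-dimensional slices of $B(x,t)$ yields a degree-$\le N$ polynomial $q_P$ in $u=y\cdot\mathbf 1$ (averaging over a ball of radius $R$ is polynomial in $R^2$, and $R^2$ is quadratic in $u$ on these slices), and Jensen plus the mass bound give $\fint_{B(x,t)}\|g-P\|_Y^p\,dy\gtrsim \Phi(x)^p\inf_{\deg p\le N}\fint_{B_\R(x\cdot\mathbf 1,\,t)}\|f-p\|_Y^p$ with an absolute constant. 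Integrating over admissible centres and small scales, and using the $W^{k,q}$-comparison and the $n$-dimensional embedding, yields $[f]_{A^{kN}_{qq}(\R;Y)}\lesssim_{k,N}\mathfrak a(k,N,q,n,Y)\|f\|_{W^{k,q}(\R;Y)}$ but with the scale integral truncated to a bounded range; a scaling argument identical to step (iii) of the proof of Theorem \ref{mart} (replace $f$ by $f(\lambda\cdot)$, $\lambda\to\infty$) removes the truncation.

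I expect this last dimension-reduction to be the main obstacle. The naive analogue $g(y)=f(y_1)\phi_N(\tilde y)$ fails to give an $n$-independent constant: the cross-sections of $B_{\R^n}(x,t)$ perpendicular to $e_1$ concentrate at scale $t/\sqrt n$, so that construction only compares the one-dimensional error of $f$ at scale $t/\sqrt n$ with the $n$-dimensional error of $g$ at scale $t$, costing $n^{k/2}$ after the change of variables in the definition of $[\cdot]_{A^{kN}_{qq}}$; rotating to the diagonal direction is precisely what keeps the two scales comparable. The bookkeeping of $\Phi$ (smoothness, normalisation, and placing $\operatorname{supp}f$ correctly relative to the cylinder) and the passage from the truncated to the full scale integral are routine but require some care.
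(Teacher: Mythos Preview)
Your derivation of the qualitative equivalence, of $\mathfrak a(k,N,q,n,Y)\lesssim_{k,N}\mathfrak m(q,Y)\sqrt n$, and of $\mathfrak m(q,Y)\lesssim_{k,N}\mathfrak a(k,N,q,1,Y)+C$ is exactly what the paper has in mind: the corollary is stated without proof beyond the sentence ``By Theorem~\ref{AequalsB}, we now obtain the following corollary,'' so the intended argument is precisely the direct combination of Theorem~\ref{AequalsB} with Theorem~\ref{mart} that you write out in your first paragraph.

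Where you diverge from the paper is in the last inequality $\mathfrak a(k,N,q,1,Y)+C\lesssim_{k,N}\mathfrak a(k,N,q,n,Y)+C$. The paper supplies no separate argument here; the evident route is
\[
  \mathfrak a(k,N,q,1,Y)\lesssim_{k,N}\mathfrak b(k,q,1,Y)+1\lesssim\mathfrak b(k,q,n,Y)+1\lesssim_{k,N}n^{c(N,M,k)}\mathfrak a(k,N,q,n,Y)+1,
\]
using Theorem~\ref{AequalsB} at both ends and the dimension reduction (ii) of Theorem~\ref{mart} in the middle. This picks up the factor $n^{c(N,M,k)}$ from the second inequality of Theorem~\ref{AequalsB}, so if one insists on reading $\lesssim_{k,N}$ as literally $n$-independent, the paper does not actually supply the missing step. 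Your diagonal-embedding construction is therefore not a reproduction of the paper's proof but a genuine addition to it: a direct dimension reduction for the local-approximation seminorm that avoids passing through $\mathfrak b$ on the way back up. The idea of orienting the embedded line along $\mathbf 1$ so that the projected measure of $B_{\R^n}(x,t)$ lives at scale $\asymp t$ (rather than $t/\sqrt n$) is sound and is precisely what is needed to keep the constant $n$-free; but be aware that this is your own contribution, sketched rather than proved, and not something you will find in the paper to check against.
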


\begin{remark}
Theorem \ref{mart} may also be compared with \cite[Proposition 3.1]{Veraar}, which investigates conditions for the embedding of the Bessel potential space
\begin{equation*}
  H^{s,q}(\R^n;Y):=\{f\in L^q(\R^n;Y):\|f\|_{H^{s,q}}:=\|(1-\Delta)^{s/2}f\|_{L^q}<\infty\}
\end{equation*}
into $B^s_{qq}(\R^n;Y)$. It is shown in \cite{Veraar} that this happens when $Y$ is a UMD space with Rademacher cotype $q\in[2,\infty)$. See \cite{Veraar} or \cite{HNVW} for the definitions of these notions.

It is a well-known consequence of Fourier multiplier theorems valid in UMD spaces that $H^{k,q}(\R^n;Y)=W^{k,q}(\R^n;Y)$ when $Y$ is a UMD space (see \cite[Theorem 5.6.11]{HNVW}). On the other hand, Rademacher cotype $q$ and martingale cotype $q$ are equivalent properties of $Y$, when $Y$ is a UMD space (see \cite[Proposition 4.3.13]{HNVW}). Thus the mentioned result of \cite{Veraar}, in the case of integer smoothness $s=k$, can be seen as a corollary of Theorem \ref{mart}. In particular, the assumption that a space is UMD with Rademacher cotype $q$ is strictly stronger than martingale cotype $q$.

It is also asked in \cite[Remark 3.2]{Veraar} whether the assumption of Rademacher cotype is necessary. By the equivalences just explained, Theorem \ref{mart} answers this affirmatively, at least in the case of integer smoothness $s=k$.

Finally, \cite{Veraar} also proves a reverse embedding $B^s_{pp}(\R^n;Y)\hookrightarrow H^{s,p}(\R^n;Y)$ when $Y$ is a UMD space with Rademacher type $p\in(1,2]$, and asks about the necessity of the type $p$ condition. We suspect that there is a corresponding reverse analogue of Theorem \ref{mart}, namely the equivalence of martingale type $p$ with the embedding $B^k_{pp}(\R^n;Y)\hookrightarrow W^{k,p}(\R^n;Y)$, which would in particular answer the last mentioned question of \cite{Veraar} in the affirmative. However, we have not pursued this line of inquiry here.
\end{remark}

\subsubsection*{Acknowledgement}
We would like to thank an anonymous referee for constructive comments that improved this paper.

\bibliography{besovembedding}
\bibliographystyle{abbrv}

%
%
%
%
%
%
%
%
%
%

\end{document}